\title{A ruled residue theorem for function fields of conics}
\author{Parul Gupta}
\author{Karim Johannes Becher}
\email{parul.gupta@iiserpune.ac.in, parul.gupta@uantwerpen.be}
\email{karimjohannes.becher@uantwerpen.be}
\address{IISER Pune, Dr.~Homi Bhabha Road, Pashan, Pune 411 008, India}
\address{Universiteit Antwerpen, Departement Wiskunde, Middelheim\-laan~1, 2020 Antwerpen, Belgium.}
\thanks{This work was supported by the FWO Odysseus Programme (project \emph{Explicit Methods in Quadratic Form Theory}), funded by the Fonds Wetenschappelijk Onderzoek -- Vlaanderen, and by the {Bijzonder Onderzoeksfonds} (BOF), {Universiteit Antwerpen},
 (project BOF-DOCPRO4, 2865, \emph{Nieuwe methoden in de arithmetiek van lichamen en de theorie van kwadratische vormen}).}
\date{02.09.2020}
\newcommand{\nat}{\mathbb{N}}
\newcommand{\mc}[1]{\mathcal{#1}}
\newcommand{\mg}[1]{{#1}^{\times}}
\newcommand{\sq}[1]{{#1}^{\times 2}}
\newcommand{\ovl}{\overline}
\newcommand{\ind}{\mathsf{ind}}
\renewcommand{\deg}{\mathsf{deg}}
\renewcommand{\max}{\mathsf{max}}
\renewcommand{\min}{\mathsf{min}}
\renewcommand{\setminus}{\smallsetminus}
\numberwithin{equation}{section}
\newtheorem*{thm*}{Theorem}
\newtheorem*{thmI}{Theorem I}
\newtheorem*{thmII}{Theorem II}
\newtheorem{thm}[equation]{Theorem}
\newtheorem{prop}[equation]{Proposition}
\newtheorem{cor}[equation]{Corollary}
\newtheorem{lem}[equation]{Lemma}
\theoremstyle{definition}
\renewenvironment{proof}{\par\noindent {\em Proof:}}{\hfill$\Box$\medskip}
\theoremstyle{plain}
\begin{document}
\maketitle

\begin{abstract}
The ruled residue theorem characterises residue field extensions for valuations on a rational function field. 
Under the assumption that the characteristic of the residue field is different from $2$ this theorem is extended here to function fields of conics. The main result is that there is at most one extension of a valuation from on the base field to the function field of a conic for which the residue field extension is transcendental but not ruled. Furthermore the situation when this valuation is present is characterised.
\smallskip

\noindent
{\sc{Classification (MSC 2010):}} 12F20, 12J10, 12J20, 14H05, 16H05
\medskip

\noindent
{\sc{Keywords:}} valuation, residue field extension, Gauss extension, rational function field, algebraic function field, genus zero, quaternion algebra
\end{abstract}

%%%%%%%%%%%%%%%%%%%%%%%%%%%%%%%%%%%%%%%%%%%%%%%%%%%%%%%%%%%%%%
\section{Introduction}\label{introduction}%%%%%%%%%%%%%%%%%%%%
%%%%%%%%%%%%%%%%%%%%%%%%%%%%%%%%%%%%%%%%%%%%%%%%%%%%%%%%%%%%%%

By an \emph{algebraic function field} we mean a finitely generated field extension of transcendence degree one. 
We assume familiarity with valuation theory over fields, as covered by \cite[Chapters 2-3]{EP}.
Given a valuation $v$ on a field $E$ with residue field $\kappa$ and an algebraic function field $F/E$ we are interested in the algebraic function fields over $\kappa$ that can occur as residue fields of valuations on $F$ extending $v$.

Before we can make our question more precise  we need to fix some terminology.
An algebraic function field $F/E$ is called \emph{rational} if $F=E(x)$ for some $x\in F$ (necessarily transcendental over $E$).
An algebraic function field $F/E$ is called \emph{ruled} if $F$ is a rational function field over some finite extension of $E$, and it is called \emph{regular} if $E$ is relatively algebraically closed in $F$.
In particular an algebraic function field is rational if and only if it is ruled and regular.

Let $E$ denote a field, $v$ a valuation on $E$ and $\kappa$ the residue field of $v$.
For $x\in E$ with $v(x)\geq 0$ we denote by $\ovl{x}$ the residue of $x$ in $\kappa$ (i.e.~the reduction modulo the maximal ideal of the valuation ring of $v$).
Let $F/E$ be an algebraic function field.
An extension of $v$ to a valuation on $F$ is called \emph{residually transcendental} if its residue field  is a transcendental extension of $\kappa$;
in this case it follows (e.g.~by using \cite[Corollary 2.2.2 and Corollary 3.2.3]{EP}) that this residue field is an algebraic function field over $\kappa$.
Let $w$ be a residually transcendental extension of $v$ to $F$ and let $\kappa'$ be the residue field of $w$.
The Ruled Residue Theorem due to J.~Ohm \cite{Ohm} asserts that, if the extension $F/E$ is ruled, then so is the residue field extension $\kappa'/\kappa$.
The aim of this article is to extend Ohm's result as far as possible to the more general case where $F/E$ has genus zero. 

We assume in the sequel that the residue field $\kappa$ has characteristic different from $2$. 
We further assume that $F/E$ is regular, which can always be achieved by replacing $E$ by the full constant field of $F/E$. 
Assume that $F/E$ has genus zero, or equivalently that $F$ is the function field of a smooth projective conic over $E$ (see \Cref{P:fufi-gen0-conic}). 
Using Ohm's theorem it is easy to conclude that the residue field $\kappa'$ must be the function field of a smooth conic over some finite extension of $\kappa$.
In \cite{KG} Khanduja and Garg obtained a more precise version of this statement.
%(The equivalence follows for example from \cite[Sections 17 and 18]{Deu}.)
In \cite[Theorems 1.1 and 1.2]{KG} it is proven
 that, if the residue field extension $\kappa'/\kappa$ is not ruled, then $w$ is an unramified extension of~$v$ and $\kappa'$ is the function field of a smooth projective conic over $\kappa$ without rational point. % (i.e.~both valuations have the same value group).
Our main result here is that this can happen for at most one extension of $v$ to $F$.

\begin{thmI}
Let $F/E$ be a function field of genus zero and let $v$ be a valuation on $E$ with residue field $\kappa$ of characteristic different from $2$. 
Then $v$ has at most one extension to $F$ whose residue field is transcendental and not ruled over $\kappa$.
Moreover, if such an extension exists, then it is unramified and  its residue field is a regular algebraic function field of genus zero over $\kappa$.
\end{thmI}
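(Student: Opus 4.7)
The plan is to use the Khanduja--Garg classification recalled above to reduce the statement to a uniqueness claim, which I would then prove by analyzing regular proper models of $F$ over $\spec\mathcal{O}_v$.

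By \Cref{P:fufi-gen0-conic}, $F=E(C)$ for some smooth projective conic $C/E$. If $C$ has an $E$-rational point then $F/E$ is rational and Ohm's Ruled Residue Theorem \cite{Ohm} yields the statement directly, so assume $C$ has no $E$-rational point and let $Q$ be the associated quaternion division algebra over $E$. By \cite[Theorems~1.1 and~1.2]{KG}, any extension $w$ of $v$ to $F$ with residue field $\kappa'$ transcendental but not ruled over $\kappa$ is unramified, and $\kappa'$ is the function field of a smooth pointless projective conic $C'$ over $\kappa$. Because such a $C'$ is geometrically integral and satisfies $H^0(C',\mathcal{O}_{C'})=\kappa$, the residue extension $\kappa'/\kappa$ is automatically regular of genus zero, which already gives the ``moreover'' part of the theorem. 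The remaining task is uniqueness of~$w$.

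For uniqueness, I would consider a regular proper model $\mathcal{X}\to\spec\mathcal{O}_v$ of~$C$ (existing by the standard resolution theory for arithmetic surfaces). Each irreducible component $\Gamma$ of the special fiber $\mathcal{X}_\kappa$ with multiplicity one gives an unramified residually transcendental extension of $v$ to $F$, namely the valuation at the generic point of $\Gamma$; its residue field is the function field of $\Gamma$, and, letting the model vary, one recovers all unramified residually transcendental extensions this way. The equality $p_a(\mathcal{X}_\kappa)=p_a(C)=0$ forces each component to be a smooth projective curve of genus zero over its field of constants, so the residue field attached to a component is non-ruled over $\kappa$ exactly when the component is a pointless conic having $\kappa$ as its full constant field.

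The proof then splits according to the reduction behaviour of $Q$ at~$v$. If $Q$ has good reduction it extends to an Azumaya $\mathcal{O}_v$-algebra and $C$ admits a smooth projective model $\mathcal{C}/\mathcal{O}_v$; this $\mathcal{C}$ is already regular and proper with irreducible reduced special fiber, so it is the minimal regular model and yields the unique candidate for a non-ruled extension, occurring precisely when $\mathcal{C}_\kappa$ is pointless. If $Q$ has bad reduction, one must show that every component of the minimal regular model's special fiber is a form of $\pp^1$ over some finite extension of $\kappa$, whence no residue field is non-ruled over $\kappa$. I expect this second case to be the main obstacle. It can be attacked either by an explicit desingularisation of the naive model $uX^2+\pi Y^2=Z^2$ of $C$ (writing $Q=(u,\pi)_E$ with $u\in\mathcal{O}_v^\times$ and $\pi$ a uniformizer of~$v$), identifying each component as a form of $\pp^1$ over $\kappa(\sqrt{\bar u})$; or algebraically, by using the residue exact sequence for the Brauer group of the henselization $\widehat{E}_v$ to show that an unramified extension of $\widehat{E}_v$ whose residue is the function field of a pointless $\kappa$-conic cannot split a ramified quaternion class in $\brd{\widehat{E}_v}$, forcing $Q$ to be unramified at~$v$. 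Either argument concludes the proof of Theorem~I.
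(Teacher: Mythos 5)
There is a genuine gap, and it is structural: your uniqueness argument runs through regular proper models of the conic over $\spec\mathcal{O}_v$, but the theorem is stated for an \emph{arbitrary} valuation $v$. The valuation ring $\mathcal{O}_v$ is Noetherian only when $v$ is trivial or discrete of rank one, so for a general value group (non-discrete, higher rank) there is no resolution theory for arithmetic surfaces over $\mathcal{O}_v$, no minimal regular model, and no dictionary between unramified residually transcendental extensions of $v$ and multiplicity-one components of special fibers. The paper's proof deliberately avoids all of this: it is purely valuation-theoretic (Ohm elements, Gauss extensions, explicit residue computations for quadratic extensions as in \Cref{L:FI2}), it pins down the unique candidate valuation as the unique extension to $F$ of the Gauss extension of $v$ on $E(X^2)$ with respect to $aX^2/b$ (\Cref{uniqueextensionlem} and \Cref{non-ruled-val}), and it works for any value group. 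Even if you restrict to discrete rank-one $v$ (where the model-theoretic dictionary is available for excellent DVRs), your sketch is incomplete at the point you yourself flag as the main obstacle: in the bad-reduction case you offer two possible attacks (explicit desingularisation of $uX^2+\pi Y^2=Z^2$, or the residue sequence for $\brd{\widehat{E}_v}$) without carrying either out, and the second again presupposes a discretely valued, henselian setting.

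Two smaller points. First, the inference that $p_a(\mathcal{X}_\kappa)=0$ ``forces each component to be a smooth projective curve of genus zero over its field of constants'' needs the full adjunction/intersection bookkeeping for a possibly non-reduced, reducible fiber; it is true but not a one-liner, and in the good-reduction case you still must argue that blowing up never creates a non-ruled component (exceptional divisors are forms of $\pp^1$ over finite extensions of $\kappa$, hence ruled) and that no non-divisorial residually transcendental extension exists. Second, outsourcing the ``moreover'' clause to \cite{KG} is legitimate in itself (the paper acknowledges that this part corresponds to their Theorem 1.2), but note that the paper's stated aim is an independent, elementary proof; your proposal does not replace that, and its uniqueness component, which is the genuinely new content of Theorem I, is not established.
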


The last part of this statement corresponds essentially to \cite[Theorem 1.2]{KG}.
The proof which we present in Section 3 is independent, but equally elementary and directly based on the methods from \cite{Ohm}, which we revisit in Section 2.
Theorem~I appears as \Cref{C:main1} below in the exposition.

In Section 4 we characterise the situation when there exists an  extension of $v$ to $F$ with a  non-ruled transcendental residue field extension and we relate this to the properties of a quaternion algebra over $E$ which is associated with $F/E$.
To an $E$-quaternion algebra $Q$ we denote by $E(Q)$ the function field of the Severi-Brauer variety given by $Q$.
This is a regular algebraic function field of genus zero, and conversely every regular function field of genus zero over $E$ is isomorphic to $E(Q)/E$ for some $E$-quaternion algebra $Q$.
In this setup we obtain a refined version of our result, which appears as \Cref{T:main} below.

\begin{thmII}\label{maintheorem}
Let $Q$ be an $E$-quaternion algebra and $F=E(Q)$.
Let  $v$ be a valuation on $E$ with residue field $\kappa$ of characteristic different from $2$.
Then the following are equivalent:
\begin{enumerate}[$(i)$]
\item The valuation $v$ extends to a valuation on $F$ whose residue field is transcendental and not ruled over $\kappa$.
\item The valuation $v$ extends uniquely to a valuation on $F$ whose residue field is transcendental and regular over $\kappa$.
\item The valuation $v$ has an unramified extension to a valuation on $Q$.
\end{enumerate}
If these conditions are satisfied, then the valuation in $(i)$ is unique and it coincides with the valuation characterised in $(ii)$.
\end{thmII}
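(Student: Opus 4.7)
\emph{Overall strategy.} Throughout I build on Theorem~I (\Cref{C:main1}), which already guarantees that any extension as in~(i) is unique, unramified, and has regular residue field of genus zero over $\kappa$. What remains is to upgrade this to the uniqueness statement in~(ii) among all transcendental regular residue extensions, and to relate both conditions to the existence of an unramified extension of $v$ to the quaternion algebra $Q$ itself. The cleanest route is to prove (iii)~$\Rightarrow$~(i) and (iii)~$\Rightarrow$~(ii) by an explicit Gauss-type construction of a valuation on $F$, and then to derive~(iii) from~(i) by a Brauer-theoretic specialisation argument and from~(ii) by ruling out the possibility that $Q$ is either split or ramified over the completion $E_v$.

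\emph{From (iii).} Assume $v$ extends unramified to $Q$. Then $Q \otimes_E E_v$ is a quaternion division algebra whose residue algebra is a non-split quaternion algebra $\bar Q$ over $\kappa$; after rescaling the generators we may write $Q = (a,b)_E$ with $v(a)=v(b)=0$ and $\bar Q = (\bar a, \bar b)_\kappa$. The smooth projective conic $C\colon aX^2 + bY^2 = Z^2$ then admits a smooth model over the valuation ring $\oo_v$ whose special fibre is the smooth $\kappa$-conic $\bar C$ defined by the reduced equation, with $\kappa(\bar C) = \kappa(\bar Q)$. The generic point of this special fibre yields an extension $w$ of $v$ to $F = E(C)$ with residue field $\kappa(\bar Q)$; since $\bar Q$ is non-split, $\kappa(\bar Q)$ is a regular, non-rational function field of genus zero over $\kappa$, hence transcendental and not ruled, which gives~(i). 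Any other extension of $v$ to $F$ with transcendental residue corresponds to a prime divisor on a proper $\oo_v$-model of $C$ different from the smooth one, and must then be centred on a closed point of $\bar C$; since $\bar C$ has no $\kappa$-rational point, the residue field of such an extension is a rational function field over a proper finite extension of $\kappa$ and is therefore not regular over $\kappa$. Thus $w$ is the unique transcendental regular extension of $v$ to $F$, which proves~(ii) and identifies the extensions in (i) and (ii).

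\emph{Towards (iii).} For (i)~$\Rightarrow$~(iii), Theorem~I provides an unramified $w$ with residue field $\kappa' \cong \kappa(\bar Q')$ for a uniquely determined non-split quaternion algebra $\bar Q'$ over $\kappa$. Since $F$ splits $Q$, the class $[Q]$ vanishes in $\br{F}$; applying the unramified specialisation map at $w$ and comparing with the ramification of $v$ on $Q$, one concludes that $v$ is unramified on $Q$ and that its residue algebra is $\bar Q'$. For (ii)~$\Rightarrow$~(iii) I would argue contrapositively: if $Q$ splits over $E_v$, then $F \otimes_E E_v \cong E_v(t)$ and Ohm's theorem supplies many distinct extensions of $v$ to $F$ with transcendental regular residue, contradicting the uniqueness in~(ii); and if $Q\otimes_E E_v$ is a division algebra whose valuation is ramified over $v$, then the corresponding $\oo_v$-model of $C$ fails to be smooth and no transcendental regular residue extension of $v$ to $F$ can exist, again contradicting~(ii).

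\emph{Main obstacle.} The delicate step is (i)~$\Rightarrow$~(iii): converting the purely valuation-theoretic data on $F$ into a Brauer-theoretic statement about $Q$ over $E$. This requires the specialisation compatibilities between the unramified parts of $\br{E}$ and $\br{F}$ and the explicit identification of the non-split quaternion algebra $\bar Q'$ with the residue of $Q$ at $v$; it is the one part of the argument that cannot be reduced to a direct invocation of Theorem~I or of Ohm's theorem.
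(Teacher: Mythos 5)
There are genuine gaps here, and the most serious one is the step $(i)\Rightarrow(iii)$, which you yourself flag as the ``main obstacle'' and do not actually carry out. The Brauer-theoretic specialisation argument you sketch needs residue maps on unramified Brauer classes, which are available in clean form only for discrete (or otherwise restricted) valuations, whereas the theorem concerns arbitrary valuations with arbitrary value groups. The paper avoids Brauer theory entirely at this point: it normalises $Q\simeq(a,b)_E$ so that either $v(a)=v(b)=0$ or $v(a)\notin 2\Gamma$, shows via \Cref{non-ruled-val} and \Cref{uniqueextensionlem} that condition $(i)$ forces $\Gamma_{w_\star}=\Gamma$ and hence $v(a)=v(b)=0$, identifies $\kappa_{w_\star}\simeq_\kappa \kappa\bigl((\ovl{a},\ovl{b})_\kappa\bigr)$ by \Cref{P:fufi-conic-gauss-residue}, deduces that $(\ovl{a},\ovl{b})_\kappa$ is a division algebra, and only then invokes \Cref{L:residue-quatdivalg} (resting on \cite{TW15}) to produce the unramified extension of $v$ to $Q$. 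The same objection applies to your use of proper $\oo_v$-models and prime divisors to classify all residually transcendental extensions in the step $(iii)\Rightarrow(ii)$: for a general valuation ring $\oo_v$ there is no such correspondence to appeal to, and the paper instead argues valuation-theoretically (its implication $(iii')\Rightarrow(ii')$), using only that regularity of $\kappa_w/\kappa$ forces $\ovl{a},\ovl{b},-\ovl{a}\ovl{b}\notin\sq{\kappa_w}$, which pins down $w(x)=w(ax^2+b)=0$ and the transcendence of $\ovl{x}$, whence $w=w_\star$ by \Cref{uniqueextensionlem}.

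There is also an outright false claim in your contrapositive argument for $(ii)\Rightarrow(iii)$: in the case where $Q\otimes_E E_v$ is a ramified division algebra you assert that no transcendental regular residue extension of $v$ to $F$ exists. This is wrong. By \Cref{P:fufi-conic-gauss-residue}$(i)$, and more precisely by the first half of \Cref{P:infinite-rational}, the valuation $v$ then has infinitely many extensions to $F$ whose residue fields are rational function fields over $\kappa$, and a rational function field over $\kappa$ is a transcendental regular extension of $\kappa$. What fails in $(ii)$ in this case is uniqueness, not existence; the paper's \Cref{P:infinite-rational} treats the split and the ramified cases uniformly by exhibiting infinitely many Gauss-type extensions with rational residue fields, which is exactly what the implication $(ii)\Rightarrow(iii)$ requires.
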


As of now our results are limited to the case where the characteristic of the residue field $\kappa$ (and hence also of the fields $E$ and $F$) is different from $2$.
We expect that a similar result can be proven without this restriction, and we have chosen to formulate our results in a way that we hope them to extend to the general case. 
In particular, this has motivated us to formulate the criterion for the presence of an extension of the valuation $v$ to $F$ in Theorem II with the distinguishing properties $(i)$ and $(ii)$ in terms of the $E$-quaternion algebra $Q$ associated with the function field $F/E$. In characteristic different from $2$, one could formulate the same statement in terms of residue forms of the ternary quadratic form given by the pure part of the norm form of $Q$, which defines the conic over $E$ with function field $F/E$.
Moreover the formulation of Theorem~II in terms of a quaternion algebra raises a question, brought to our attention by Jean-Pierre Tignol and independently by an anonymous referee. Is it possible without assuming conditions $(i)$--$(iii)$ to hold to associate naturally to $v$ a distinguished value function on $Q$ which gives the valuation extension of $v$ to $Q$ in $(iii)$ provided that $(i)$ and $(ii)$ hold?

%%%%%%%%%%%%%%%%%%%%%%%%%%%%%%%%%%%%%%%%%%%%%%%%%%%%%%%%%%%%%%%%%%%%%%%%
\section{Valuations on rational function fields}\label{valonRFF}%%%%%%%%
%%%%%%%%%%%%%%%%%%%%%%%%%%%%%%%%%%%%%%%%%%%%%%%%%%%%%%%%%%%%%%%%%%%%%%%%

Let $E$ be a field.
We denote by $E[X]$ the polynomial ring and by $E(X)$ the rational function field in one variable $X$ over $E$.

Note that any element $Y \in E(X) \setminus \{0\}$ has a unique representation $Y =\frac{f}{g}$ with coprime polynomials $f,g \in E[X]$ such that $f$ is monic.
We recall the follwing basic statement on the degree of the extension $E(X)/E(Y)$ for $Y\in E(X)\setminus E$ and the structure of $E[X,Y]$ as an $E[Y]$-module.
In view of our lack of a reference for the second part, we include a full argument.

\begin{prop}\label{polyrep}
Let $Y\in E(X)\setminus E$ and let  $f,g \in E[X]$ be coprime and such that $Y =\frac{f}{g}$.
Then 
$$[E(X):E(Y)]= \max\{\deg(f), \deg(g)\}\,.$$ 
Moreover, $X$ is integral over $E[Y]$ if and only if $\deg(f)>\deg(g)$, and in this case  an $E[Y]$-basis of
$E[X,Y]$ is given by $(1,X,\dots,X^{n-1})$ for $n=[E(X):E(Y)]$.
\end{prop}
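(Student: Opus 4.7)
The plan is to work with the polynomial
$$P(T)=f(T)-Y\,g(T)\in E[Y][T],$$
which has $X$ as a root. First I would note that $Y$ must be transcendental over $E$: since $Y\in E(X)\setminus E$ and $E(X)/E$ has transcendence degree one, an algebraic $Y$ would force $E(X)/E(Y)$ to contain a transcendental element over an algebraic extension of $E$, a contradiction. Hence $E[Y]$ is genuinely a polynomial ring in the indeterminate $Y$.

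Next I would prove the degree formula by showing that $P(T)$ is irreducible in $E(Y)[T]$ via Gauss's lemma. Regarded as a polynomial in $Y$ with coefficients in $E[T]$, $P$ has degree $1$ in $Y$ with leading coefficient $-g(T)$ and constant term $f(T)$; any nontrivial factorization in $E[T][Y]$ would therefore have to include a factor lying in $E[T]$ and dividing both $f$ and $g$, which by coprimeness is a unit. So $P$ is irreducible in $E[Y][T]$ and hence, by Gauss, in $E(Y)[T]$. Since $P(X)=0$ and $\deg_T P=\max\{\deg f,\deg g\}$, the degree formula $[E(X):E(Y)]=\max\{\deg f,\deg g\}$ follows at once.

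For the integrality dichotomy I would argue two directions. If $\deg f>\deg g$, then using monicity of $f$ the polynomial $P(T)$ is monic of degree $n=\deg f$ with coefficients in $E[Y]$, so $X$ is integral over $E[Y]$. For the converse I would bring in the valuation $v_\infty$ on $E(X)$ normalised by $v_\infty(X)=-1$: if $\deg f\leq \deg g$, then $v_\infty(Y)=\deg g-\deg f\geq 0$, so $E[Y]\subseteq \oo_{v_\infty}$. Since $X\notin \oo_{v_\infty}$ and valuation rings are integrally closed in their fraction field, $X$ is not integral over $E[Y]$.

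Assuming $\deg f>\deg g$, the basis claim then follows quickly. Since $Y\in E[X]$ only when $g$ is constant but in any case $Y\in E[Y]\subseteq E[Y][X]$, one has $E[X,Y]=E[Y][X]$. The monic relation $P(X)=0$ expresses $X^n$ as an $E[Y]$-linear combination of $1,X,\dots,X^{n-1}$, and by a routine induction the same holds for every $X^k$ with $k\geq n$; hence $(1,X,\dots,X^{n-1})$ spans $E[X,Y]$ over $E[Y]$. Linear independence is inherited from the already proven equality $[E(X):E(Y)]=n$, which rules out any nontrivial relation with coefficients in $E[Y]\subseteq E(Y)$. The slightly subtle point to get right is the Gauss's lemma irreducibility step, as all the rest is essentially formal once $P(T)$ is known to be a minimal polynomial of $X$ over $E(Y)$.
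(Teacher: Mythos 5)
Your proof is correct, and its core --- the polynomial $H(T)=f(T)-Yg(T)$, irreducibility via degree $1$ in $Y$ plus coprimality plus Gauss's lemma, and the resulting degree formula --- is exactly the paper's argument. The one place you genuinely diverge is the converse of the integrality claim. The paper stays inside the polynomial ring: if $X$ is integral over $E[Y]$, its minimal polynomial $\theta^{-1}H(T)$ (with $\theta=\lc_T H$) must have coefficients in $E[Y]$, and primitivity of $H$ in $T$ over $E[Y]$ forces $\theta\in\mg{E}$, i.e.\ $\deg(f)>\deg(g)$. You instead invoke the valuation at infinity: if $\deg f\leq\deg g$ then $E[Y]\subseteq\oo_{v_\infty}$ while $X\notin\oo_{v_\infty}$, and integral closedness of valuation rings finishes it. Both are sound; the paper's version is purely algebraic and self-contained, while yours is arguably slicker and fits the valuation-theoretic spirit of the article. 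One cosmetic remark: your justification that $Y$ is transcendental over $E$ is muddled as stated (there is no immediate contradiction in ``$E(X)/E(Y)$ containing a transcendental element over an algebraic extension''); the clean argument is simply that $H(X)=0$ gives $[E(X):E(Y)]<\infty$, so $Y$ algebraic over $E$ would make $X$ algebraic over $E$. This is needed to know $E[Y]$ is a polynomial ring before applying Gauss's lemma, so it is worth stating correctly.
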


\begin{proof}
Since the representation $Y=\frac{f}g$ determines $f$ and $g$ up to a scalar,
we may assume that $f$ is monic. Set $n=\max\{\deg(f), \deg(g)\}$.
Note that $n\geq 1$ because $Y\notin E$.
We consider the polynomial $$H(T) = f(T) -Yg(T)\in E[Y,T]\,.$$
Then $\deg_T(H(T))=n$ and $H(X) =0$.
We claim that $H(T)$ is irreducible in  $E(Y)[T]$.
Since $Y\notin E$, $H(T)$ is primitive as a polynomial in $T$ over $E[Y]$.
By Gauss' Lemma, it is enough to show that $H(T)$ is irreducible in $E[Y,T]$.
Suppose that $H(T) = h_1h_2$ for some $h_1,h_2 \in E[Y,T]$. 
Since $\deg_Y(H(T)) = 1$ we have $\deg_Y(h_1)=0$ or $\deg_Y(h_2)=0$.
Assume that $\deg_Y(h_1)=0$.
Then $h_1 \in E[T]$ and $h_1$ divides $f$ and $g$ in $E[T]$.
As $f$ and $g$ are coprime, we obtain that $h_1\in E$.
This proves that $H(T)$ is irreducible.
Hence $[E(X):E(Y)] = \deg_T(H(T))=n$.

If $\deg(f)>\deg(g)$, then $H(T)$ is monic as a polynomial in $T$ over $E[Y]$, and as $H(X) =0$, it follows that $X$ is integral over $E[Y]$ and $(1,X,\dots,X^{n-1})$ is an $E[Y]$-basis of $E[X,Y]$ for $n=\deg_T(H(T))=[E(X):E(Y)]$.

Assume conversely that $X$ is integral over $E(Y)$.
Let $\theta \in E[Y]$ be the leading coefficient of $H(T)$. Then $\theta^{-1}H(T)$ is the minimal polynomial  of $X$ over $E(Y)$. Since $X$ is integral over $E[Y]$, we have that $\theta^{-1}H(T) \in E[Y,T]$. 
Since $H(T)$ is primitive in $T$ over $E[Y]$, we obtain that $\theta\in\mg{E}$. 
Hence $\deg(f) > \deg(g)$.
\end{proof}

For a valuation $v$ on a field $E$, we denote by $\mc{O}_v$ the valuation ring, by $\kappa_v$ the residue field and by $\Gamma_v$ the value group of $v$. For $x\in {\mc O_v}$ we denote by $\ovl{x}$ the residue of $x$ in $\kappa_v$.

We will now collect some facts on extensions of valuations from $E$ to a field extension. In the final section we will also consider extensions of $v$ to quaternion division algebras over $E$. 

Let $v$ be a valuation on $E$.
Let $L/E$ be a field extension or, more generally, let $L$ be an $E$-division algebra. 
An \emph{extension of $v$ to $L$} is a valuation $w$ on $L$ such that $\Gamma_v \subseteq \Gamma_w$ and $w|_E=v$.
By \cite[Section 3.1]{EP}, if $L$ is a field, then such an extension always exists.
Given an extension $w$ of $v$ to $L$, we denote 
the value group by $\Gamma_w$ and the residue field by $\kappa_w$,
and we obtain that $\Gamma_v$ is a subgroup of $\Gamma_w$ and $\kappa_w/\kappa_v$ is a field extension.
We call an extension $w$ of $v$ to $L$ \emph{unramified} if $\Gamma_w=\Gamma_v$ holds, otherwise we call it \emph{ramified}.
We identify two extensions of a valuation to $L$ if they correspond to one another under an order preserving isomorphism of their value groups.

We will often use the fact that, if $L/E$ is a finite field extension and $w$ is an extension of $v$ to $E$, then $[\Gamma_w:\Gamma_v]\cdot [\kappa_w:\kappa_v]\leq [L:E]$; see \cite[Corollary 3.2.3]{EP}.
We will especially use this in the case of a quadratic field extension.
In the case where $v$ is unramified in this quadratic extension we will use the following explicit description of the residue field extension.
 
\begin{lem}\label{L:FI2}
Assume that $v(2)=0$.
Let $a\in \mg E\setminus \sq E$ be such that $v(a)\in 2\Gamma_v$ and let $v'$ be an extension of $v$ to $E(\sqrt{a})$.
Then $\Gamma_{v'}=\Gamma_v$, $a\sq E \cap \mg{\mc O}_v \neq \emptyset$ and $\kappa_{v'}=\kappa_v(\sqrt{\ovl{u}})$ for any $u \in a\sq E \cap \mg{\mc O}_v$.
Moreover, if $v'$ is the unique extension of $v$ to $E(\sqrt{a})$, then $[\kappa_{v'}:\kappa_v]=2$.
\end{lem}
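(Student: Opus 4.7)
The plan is to establish the four assertions in the order stated, making repeated use of the identity $E(\sqrt{a}) = E(\sqrt{u})$ for any $u \in a\sq{E}$. Since $v(a) \in 2\Gamma_v$, I pick $r \in \mg{E}$ with $v(r) = \tfrac{1}{2}v(a)$ and set $u = ar^{-2}$; then $u \in a\sq{E} \cap \mg{\mc O}_v$, proving the non-emptiness claim. Now fix any $u \in a\sq{E} \cap \mg{\mc O}_v$ and note that $\sqrt{u} \in E(\sqrt{a})$ satisfies $2v'(\sqrt{u}) = v(u) = 0$, so by torsion-freeness of $\Gamma_{v'}$ we have $v'(\sqrt{u}) = 0$. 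Hence $\sqrt{u} \in \mg{\mc O}_{v'}$ and its residue $\ovl{\sqrt{u}}$ is a square root of $\ovl{u}$ in $\kappa_{v'}$, giving $\kappa_v(\sqrt{\ovl{u}}) = \kappa_v(\ovl{\sqrt{u}}) \subseteq \kappa_{v'}$.

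To prove $\Gamma_{v'} = \Gamma_v$ and the reverse residue inclusion $\kappa_{v'} \subseteq \kappa_v(\sqrt{\ovl{u}})$, I analyse an arbitrary nonzero $z = x + y\sqrt{u} \in E(\sqrt{u})$ with $x, y \in E$. The case $y = 0$ is immediate; for $y \neq 0$, scaling by $y$ reduces the question to computing $v'(t + \sqrt{u})$ and, when $z$ is a $v'$-unit, the residue $\ovl{z}$, where $t = x/y \in E$. The basic estimate $v'(t + \sqrt{u}) \geq \min\{v(t), 0\}$ is an equality in all cases except possibly when $v(t) = 0$ and $\ovl{t} = -\ovl{\sqrt{u}}$. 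In this exceptional subcase (which forces $\ovl{u} \in \sq{\kappa_v}$) I exploit the identity $(t + \sqrt{u})(t - \sqrt{u}) = t^2 - u \in E$: the hypothesis $v(2) = 0$ ensures $\ovl{t - \sqrt{u}} = 2\ovl{t} \neq 0$, so $v'(t - \sqrt{u}) = 0$ and therefore $v'(t + \sqrt{u}) = v(t^2 - u) \in \Gamma_v$. The residue $\ovl{z}$ in this subcase is then recovered from the identity $z(t - \sqrt{u}) = y(t^2 - u) \in E$, whose left-hand side is a product of two $v'$-units and whose right-hand side has residue in $\kappa_v$; combined with $\ovl{\sqrt{u}} \in \kappa_v$ (which holds here because $\ovl{u}$ is a square) this yields $\ovl{z} \in \kappa_v$. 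A short case check over the remaining possibilities for $v(t)$ gives both $v'(z) \in \Gamma_v$ and $\ovl{z} \in \kappa_v(\ovl{\sqrt{u}})$, establishing $\Gamma_{v'} = \Gamma_v$ and $\kappa_{v'} = \kappa_v(\sqrt{\ovl{u}})$.

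For the final assertion, assume $v'$ is the unique extension of $v$ to $E(\sqrt{u})$. If $\ovl{u}$ were a square in $\kappa_v$, say $\ovl{u} = \ovl{c}^2$ with $c \in \mg{\mc O}_v$, then the reduction $T^2 - \ovl{u}$ of $T^2 - u$ would factor coprimely as $(T - \ovl{c})(T + \ovl{c})$ over $\kappa_v$ (since $v(2) = 0$ forces $\ovl{c} \neq -\ovl{c}$); applying Hensel's lemma in the Henselization of $(E, v)$ would produce two distinct linear factors of $T^2 - u$ there, hence two distinct extensions of $v$ to $E(\sqrt{u})$, contradicting uniqueness. Therefore $\ovl{u} \notin \sq{\kappa_v}$, $\kappa_v(\sqrt{\ovl{u}})/\kappa_v$ is a quadratic extension, and $[\kappa_{v'} : \kappa_v] = 2$ follows from the identity established above. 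The main technical obstacle throughout is the subcase $v(t) = 0$ with $\ovl{t} + \ovl{\sqrt{u}} = 0$, which is precisely where the hypothesis $v(2) = 0$ becomes indispensable.
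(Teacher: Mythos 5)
Your proof is correct, but it takes a genuinely different route from the paper's. The paper's proof is built on the fundamental inequality: from $[\kappa_{v'}:\kappa_v]\cdot[\Gamma_{v'}:\Gamma_v]\le 2$ and the inclusion $\kappa_v(\sqrt{\ovl{u}})\subseteq\kappa_{v'}$ it splits into two cases according to whether $\ovl{u}\in\sq{\kappa}_v$; if not, degree counting settles everything at once, and if so, it uses the factorisation $(y+\sqrt{u})(y-\sqrt{u})=y^2-u$ together with $v'(2y)=0$ to show that $v'$ and $v'\circ\sigma$ (with $\sigma$ the nontrivial $E$-automorphism) are distinct extensions, whence $\kappa_{v'}=\kappa_v$ by the fundamental inequality for several extensions. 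You instead compute $v'(x+y\sqrt{u})$ and the residue of every unit explicitly, which yields $\Gamma_{v'}=\Gamma_v$ and $\kappa_{v'}\subseteq\kappa_v(\sqrt{\ovl{u}})$ without invoking the inequality at all; the same identity $(t+\sqrt{u})(t-\sqrt{u})=t^2-u$ with $v(2)=0$ appears, but in your hands it resolves the one problematic cancellation rather than producing a second extension. Your argument is more elementary and self-contained for the main assertions, at the cost of a longer case analysis; for the final assertion, however, you reach for the Henselization and the factor--extension correspondence, where the paper gets the second extension for free from the conjugate valuation it has already exhibited --- the paper's route is lighter there. Both arguments are complete; one small point worth making explicit in yours is that $t^2-u\neq 0$ (it follows from $a\notin\sq{E}$), so that $v(t^2-u)$ is defined in the exceptional subcase.
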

\begin{proof}
Let $L = E(\sqrt{a})$. Since $[L:E]=2$, it follows by \cite[Corollary 3.2.3]{EP} that 
$[\kappa_{v'}:\kappa_v]\cdot [\Gamma_{v'}:\Gamma_v]\leq 2$. % and that if equality holds then $v'$ is the unique extension of $v$ to $L$.
As $v(a)\in 2\Gamma_v$ there exists $x\in \mg{E}$ with $v(a)=2v(x)$, and 
then $v(ax^{-2})=0$, whereby $a\sq E \cap \mg{\mc O}_v\neq\emptyset$.

Fix now $u\in a\sq E \cap \mg{\mc O}_v$.
Then $u\in \sq{L}\cap \mg{\mc{O}}_{v'}\,$, whereby $\kappa_v (\sqrt{\ovl{u}}) \subseteq \kappa_{v'}$.
If $\ovl{u} \notin \sq{\kappa}_v$, then $[\kappa_v(\sqrt{\ovl{u}}): \kappa_v] =2$, and as $[\kappa_{v'}:\kappa_v]\leq 2$
we conclude that $\kappa_{v'} =\kappa_v(\sqrt{\ovl{u}})$, $[\kappa_{v'}:\kappa_v]=2$ and $\Gamma_{v'} = \Gamma_v$.
Assume now that $\ovl{u} \in \sq{\kappa}_v$.
Let $y \in \mg {\mc O}_v$ be such that $\ovl{u} = \ovl{y}^2$.
Then $$0 < v'(y^2-u) = v'((y+\sqrt{u})(y-\sqrt{u})) = v'(y+\sqrt{u}) + v'(y-\sqrt{u})$$
and
$$0 = v'(2y) \geq \min \{ v'(y+\sqrt{u}), v'(y-\sqrt{u})\}\,.$$
This implies that $v'(y+\sqrt{u}) =0$ or $v'(y-\sqrt{u}) =0$.
From the last two conclusions we obtain that $v'(y+\sqrt{u}) \neq v'(y-\sqrt{u})$.
Let $\sigma$ be the nontrivial $E$-automorphism of $L$. Then $v'\circ \sigma$ is a 
valuation on $L$ extending $v$ and different from $v'$. 
By \cite[Theorem 3.3.4]{EP} this implies that $\kappa_{v'} = \kappa_v = \kappa_v(\sqrt{\ovl{u}})$.  
\end{proof}

For the sequel we fix a field $E$ and a valuation $v$ on $E$.
We denote by $\kappa$ the residue field of $v$ and by $\Gamma$ the value group of $v$.
The following statement gives an extension of $v$ from $E$ to the rational function field $E(X)$.

\begin{prop}\label{gaussextdef}
There exists a unique valuation $w$ on $E(X)$ with $w|_E=v$, $w(X) = 0$ and such that the residue $\ovl{X}$  of $X$ in $\kappa_w$ is transcendental over $\kappa$. 
For this valuation $w$ we have that $\kappa_w =\kappa(\ovl{X})$ and $w(\mg {E(X)}) = v(\mg E)$.
For $n\in\nat$ and  $a_0,\dots,a_n\in E$ we have 
$w(\mbox{$\sum_{i=0}^n$}a_iX^i)= \min\{v(a_0),\dots,v(a_n)\}$.  
\end{prop}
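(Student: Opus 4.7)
The plan is to tackle uniqueness first, since the given conditions will already force the formula for $w$ on polynomials; the formula then dictates the construction for existence.

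For uniqueness, I would let $w$ be any valuation on $E(X)$ satisfying the three conditions. For $f=\sum_{i=0}^n a_iX^i\in E[X]\setminus\{0\}$, the ultrametric inequality together with $w|_E=v$ and $w(X)=0$ immediately gives $w(f)\geq \min\{v(a_0),\dots,v(a_n)\}$. To obtain equality, set $m=\min\{v(a_i)\}$ and pick $b\in\mg E$ with $v(b)=m$; then $f/b\in\mc O_w$ and its residue is $\sum_i \ovl{a_i/b}\,\ovl{X}^{i}\in\kappa[\ovl{X}]$. Since at least one coefficient $\ovl{a_i/b}$ is nonzero and $\ovl X$ is transcendental over $\kappa$, this residue is nonzero in $\kappa_w$, forcing $w(f)=m$. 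This determines $w$ on $E[X]$, hence on $E(X)$, and at the same time establishes the explicit formula as well as $w(\mg{E(X)})=v(\mg E)$.

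For existence, I would define $w_0\colon E[X]\setminus\{0\}\to \Gamma$ by $w_0(\sum a_iX^i)=\min\{v(a_i)\}$ and extend to $E(X)^\times$ by $w(f/g)=w_0(f)-w_0(g)$. The ultrametric inequality $w_0(f+g)\geq \min\{w_0(f),w_0(g)\}$ is immediate from the definition. What needs real work is the multiplicativity identity $w_0(fg)=w_0(f)+w_0(g)$, and this is the main obstacle of the proof; once it is settled, well-definedness of $w$ on fractions and the valuation axioms follow by routine checks.

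To prove multiplicativity, I would scale $f$ and $g$ by suitable units $c,d\in\mg E$ with $v(c)=-w_0(f)$ and $v(d)=-w_0(g)$, reducing to the case $w_0(f)=w_0(g)=0$. Then $f,g\in\mc O_v[X]$ and at least one coefficient of each is a unit, so the reductions $\ovl f,\ovl g\in\kappa[X]$ are nonzero. Since $\kappa[X]$ is a domain, $\ovl{fg}=\ovl f\,\ovl g\neq 0$, which means $fg$ has a coefficient of value $0$; combined with the obvious inequality $w_0(fg)\geq 0$ this yields $w_0(fg)=0$, as required.

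Once $w$ is constructed, the remaining assertions fall out quickly. The identities $w|_E=v$, $w(X)=0$, $w(\mg{E(X)})=v(\mg E)$, and the polynomial formula are visible from the construction. To see that $\ovl X$ is transcendental over $\kappa$, I would observe that any algebraic relation $\sum \ovl{a_i}\,\ovl X^i=0$ in $\kappa_w$ with $a_i\in\mc O_v$ not all having positive value would correspond to $f=\sum a_iX^i$ with $w(f)=0$ yet $\ovl{f(X)}=0$, a contradiction. Finally, for the residue field, any element of $\mc O_w$ can be written as $b f_1/(b' g_1)$ with $b,b'\in\mg E$ and $f_1,g_1\in\mc O_v[X]$ of $w_0$-value $0$; its residue is $\ovl{b/b'}\cdot\ovl{f_1}(\ovl X)/\ovl{g_1}(\ovl X)\in\kappa(\ovl X)$, giving $\kappa_w=\kappa(\ovl X)$.
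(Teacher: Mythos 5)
Your proof is correct and complete. The paper itself does not argue this statement but simply cites \cite[Corollary 2.2.2]{EP}; your write-up is essentially the standard proof of that result (the Gauss valuation construction), with the key multiplicativity step $w_0(fg)=w_0(f)+w_0(g)$ correctly reduced via scaling to the case of unit content and settled by the integrality of $\kappa[X]$, and with the uniqueness argument correctly exploiting the transcendence of $\ovl{X}$ to pin down $w$ on polynomials. No gaps.
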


\begin{proof} 
See  \cite[Corollary 2.2.2]{EP}.
\end{proof}

The valuation on $E(X)$ defined in \Cref{gaussextdef} is called the \emph{Gauss extension of $v$ to $E(X)$ with respect to $X$}\index{Gauss extension}. 
More generally, we call a valuation on $E(X)$ a \emph{Gauss extension of $v$} if it is equal to the Gauss extension with respect to $Y$ for some  $Y\in E(X)$ with $E(Y)=E(X)$.

\begin{lem}\label{improvetransele}
Let $w$ be an extension of $v$ to $E(X)$.
Let $Y \in E(X) \cap \mg{\mc O}_w$ be such that $\ovl{Y}$ is transcendental over $\kappa$.
Then there exists $Y' \in E(X) \cap \mg{\mc O}_w$ such that  $E(Y) = E(Y')$, $\ovl{Y'}$ is transcendental over $\kappa$, $\kappa (\ovl{Y}) =\kappa(\ovl{Y'}) $ and $X$ is integral over~$E[Y']$. 
 \end{lem}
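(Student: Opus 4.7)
The plan is to invoke \Cref{polyrep}, which reduces the integrality of $X$ over $E[Y']$ to a degree condition on the canonical representation of $Y'$. Write $Y = f/g$ with $f,g \in E[X]$ coprime and $f$ monic, and set $n = \max\{\deg(f),\deg(g)\}$. Since $\ovl{Y}$ is transcendental over $\kappa$, one has $w(Y) = 0$, whence $Y \in \mg{\mc O}_w$. I aim to produce $Y'$ whose canonical representation has numerator of strictly larger degree than denominator; in each case below the other required properties ($Y' \in \mg{\mc O}_w$, transcendence of $\ovl{Y'}$, $E(Y') = E(Y)$, $\kappa(\ovl{Y'}) = \kappa(\ovl{Y})$) are straightforward to verify.

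If $\deg(f) > \deg(g)$, nothing needs to be done and $Y' = Y$ works. If $\deg(f) < \deg(g)$, take $Y' = 1/Y$: the canonical form $(\lc(g)^{-1} g)/(\lc(g)^{-1} f)$ has numerator of degree $\deg(g) > \deg(f)$, and the remaining properties follow from $\ovl{Y'} = 1/\ovl{Y}$.

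The main case is $\deg(f) = \deg(g) = n$, where a single inversion preserves the equality of degrees, so a translate is needed first. I split on the sign of $v(\lc(g))$. If $v(\lc(g)) \geq 0$, set $Y' = Y/(1 - \lc(g) Y) = f/(g - \lc(g) f)$. The polynomial $f$ is monic of degree $n$; in $g - \lc(g) f$ the leading terms of $g$ and $\lc(g) f$ cancel, forcing degree strictly less than $n$; and $\gcd(f, g - \lc(g) f) = \gcd(f,g) = 1$. Hence this is already canonical and realises the required degree inequality, so \Cref{polyrep} applies. Furthermore $1 - \lc(g) Y$ has residue $1 - \ovl{\lc(g)} \ovl{Y}$, a nonzero polynomial in the transcendental $\ovl{Y}$, so $1 - \lc(g) Y \in \mg{\mc O}_w$ and thus $Y' \in \mg{\mc O}_w$; and $\ovl{Y'} = \ovl{Y}/(1 - \ovl{\lc(g)} \ovl{Y})$ is a M\"obius transform of $\ovl{Y}$, hence transcendental over $\kappa$ and generating $\kappa(\ovl{Y})$. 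If instead $v(\lc(g)) < 0$, then $\lc(g)^{-1} \in \mc O_v$ with $\ovl{\lc(g)^{-1}} = 0$, so the translate $Y - \lc(g)^{-1}$ has residue $\ovl{Y}$ and lies in $\mg{\mc O}_w$. Taking $Y' = 1/(Y - \lc(g)^{-1})$ and rescaling to a monic numerator yields the canonical representation $(\lc(g)^{-1} g)/(\lc(g)^{-1} f - \lc(g)^{-2} g)$, where the denominator has degree $< n$ again by cancellation of leading terms while the numerator has degree $n$; the remaining properties are verified analogously.

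The main obstacle is precisely this last case $\deg(f) = \deg(g)$: neither $Y$ nor $1/Y$ produces the required degree gap, and an $\mc O_v$-valued translate must be applied before inverting. The dichotomy $v(\lc(g)) \geq 0$ versus $v(\lc(g)) < 0$ dictates which of the two natural choices of translate (by $0$ or by $\lc(g)^{-1}$) stays inside $\mg{\mc O}_w$ with transcendental residue, and is the technical heart of the argument.
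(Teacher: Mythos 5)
Your argument is correct: the reduction to the degree condition via \Cref{polyrep} and the case analysis (inverting when $\deg f<\deg g$, and first translating by $0$ or by $\lc(g)^{-1}$ according to the sign of $v(\lc(g))$ before inverting when $\deg f=\deg g$) all check out. This is essentially the paper's approach, since its proof simply cites Ohm's Lemma~3.1 together with \Cref{polyrep}, and your M\"obius-transformation argument is precisely the content of that cited lemma.
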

 
\begin{proof}
The statement follows from \cite[Lemma 3.1]{Ohm} and \Cref{polyrep}.
\end{proof}

Let $F/E$ be an algebraic function field over $E$.
An extension $w$ of $v$ to $F$ is called \emph{residually transcendental} if the residue field extension $\kappa_w/\kappa$ is transcendental. 
Let $w$ be a residually transcendental extension of $v$ to $F$.
Then $\kappa_w/\kappa$ is an algebraic function field.
We define 
$$\ind(w/E)= \min\{[F: E(Y)]\,\mid\, Y \in \mg{\mc O}_w\mbox{ and~} \ovl{Y} \mbox{ is transcendental over } \kappa \},$$
and call this the \emph{Ohm index of $w$ over $E$}.
We observe that $\ind(w/E)=1$ if and only if $F$ is a rational function field and $w$ is a Gauss extension to $F$.

We call $Y\in \mg{\mc{O}}_w$ an \emph{Ohm element of $w$ over $E$} if
$\ovl{Y}$ is transcendental over $\kappa$
and $[F:E(Y)] =\ind(w/E)$.
Note that for any residually transcendental extension of a valuation to an algebraic function field there exists an Ohm element.
Our definition is motivated by Ohm's method of proof for the Ruled Residue Theorem, where these elements 
play a prominent role. 
We will the following refined formulation of his result.

\begin{thm}[Ohm]\label{RRT}
Let $w$ be a residually transcendental extension of $v$ to $E(X)$.
Let $\ell$ be the relative algebraic closure of $\kappa$ in $\kappa_w$. 
Then $\kappa_w$ is a rational function field over $\ell$.
More precisely,  $\kappa_w = \ell( \ovl{Y})$ for any Ohm element $Y$ of $w$ over $E$.
\end{thm}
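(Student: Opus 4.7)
My plan is to prove this refinement of Ohm's ruled residue theorem by first using \Cref{improvetransele} to reduce to the case where $X$ is integral over $E[Y]$, then identifying $w|_{E(Y)}$ as a Gauss extension, and finally combining the fundamental inequality with the Ohm-element minimality to pin down $\ovl{Y}$ as the generator over $\ell$.

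First, by \Cref{improvetransele} applied to the given Ohm element $Y$, I may replace $Y$ by an equivalent Ohm element (with $E(Y)$ and $\kappa(\ovl{Y})$ unchanged) such that $X$ is integral over $E[Y]$. By \Cref{polyrep}, I then write $Y = f(X)/g(X)$ with $f, g \in E[X]$ coprime, $f$ monic, giving $n := [E(X):E(Y)] = \deg f > \deg g$, and the minimal polynomial of $X$ over $E(Y)$ is $H(T) = f(T) - Y g(T)$. Since $w(Y) = 0$ and $\ovl{Y}$ is transcendental over $\kappa$, the uniqueness clause of \Cref{gaussextdef} identifies $w_0 := w|_{E(Y)}$ as the Gauss extension of $v$ with respect to $Y$, so $\kappa_{w_0} = \kappa(\ovl{Y})$ and $\Gamma_{w_0} = \Gamma_v$. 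The fundamental inequality yields $[\kappa_w : \kappa(\ovl{Y})] \cdot [\Gamma_w : \Gamma_v] \leq n$, so $\kappa_w/\kappa$ is an algebraic function field and its relative algebraic closure $\ell$ of $\kappa$ is a finite extension. Since $\kappa$ is algebraically closed in the rational function field $\kappa(\ovl{Y})$, we have $\ell \cap \kappa(\ovl{Y}) = \kappa$, hence $\ell$ and $\kappa(\ovl{Y})$ are linearly disjoint over $\kappa$, giving $[\ell(\ovl{Y}) : \kappa(\ovl{Y})] = [\ell:\kappa]$ and the inclusion $\ell(\ovl{Y}) \subseteq \kappa_w$.

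For the main conclusion $\kappa_w = \ell(\ovl{Y})$, the strategy is to establish the equality $\ind(w/E) = [\ell:\kappa] \cdot [\Gamma_w : \Gamma_v]$. The lower bound $\ind(w/E) \geq [\ell:\kappa] \cdot [\Gamma_w : \Gamma_v]$ follows immediately by applying the fundamental inequality to any candidate Ohm element, combined with the automatic bound $[\kappa_w : \kappa(\ovl{Y})] \geq [\ell(\ovl{Y}) : \kappa(\ovl{Y})] = [\ell:\kappa]$. For the upper bound, invoke the (unrefined) Ruled Residue Theorem to write $\kappa_w = \ell(t)$ for some $t$ transcendental over $\ell$, lift $t$ to an element $\tilde{t} \in E(X) \cap \mg{\mc O}_w$ with $\ovl{\tilde{t}} = t$, and verify that $[E(X):E(\tilde{t})] = [\ell:\kappa] \cdot [\Gamma_w : \Gamma_v]$. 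Combining the equality $\ind(w/E) = [\ell:\kappa] \cdot [\Gamma_w : \Gamma_v]$ with the displayed inequality and the bound $[\kappa_w : \kappa(\ovl{Y})] \geq [\ell:\kappa]$ then forces $[\kappa_w : \kappa(\ovl{Y})] = [\ell:\kappa] = [\ell(\ovl{Y}) : \kappa(\ovl{Y})]$, yielding $\kappa_w = \ell(\ovl{Y})$.

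The main obstacle is the upper bound $[E(X) : E(\tilde{t})] = [\ell:\kappa] \cdot [\Gamma_w : \Gamma_v]$ for an appropriate lift $\tilde{t}$: this is a defectlessness statement for the extension $E(X)/E(\tilde{t})$ at $w$. The direction $\geq$ is automatic from the fundamental inequality applied to $w$ over the Gauss extension $w|_{E(\tilde{t})}$, together with the identification $\kappa_w/\kappa(t)$ as having degree $[\ell:\kappa]$. The reverse inequality requires a careful choice of lift $\tilde{t}$ together with a direct construction exploiting the minimal polynomial $H(T) = f(T) - Yg(T)$ and the refined structure $\kappa_w = \ell(t)$, potentially via a Hensel-type lifting argument that matches the minimal polynomial of $\tilde{t}$ over $E$ to the algebraic relation between $\ovl{Y}$ and $t$ in $\ell(t)$.
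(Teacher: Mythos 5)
There is a genuine gap, and it sits exactly where you flag ``the main obstacle.'' Your strategy reduces the theorem to the numerical identity $\ind(w/E)=[\ell:\kappa]\cdot[\Gamma_w:\Gamma_v]$, but you only prove the inequality $\geq$; the inequality $\leq$ is deferred to an unspecified ``careful choice of lift'' and ``Hensel-type lifting argument,'' which is not a proof. Worse, this identity is not a consequence of the theorem and fails in general: the fundamental inequality $[\kappa_w:\kappa(\ovl{Y})]\cdot[\Gamma_w:\Gamma_v]\leq[E(X):E(Y)]$ from \cite[Corollary 3.2.3]{EP} can be strict because of defect, which is possible here since the residue characteristic is only assumed different from $2$ (so positive residue characteristic $p$, where $E(X)/E(Y)$ can carry a defect factor that is a power of $p$, is allowed). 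In that situation $\ind(w/E)$ strictly exceeds $[\ell:\kappa]\cdot[\Gamma_w:\Gamma_v]$, the fundamental inequality only yields $[\kappa_w:\kappa(\ovl{Y})]\leq d\,[\ell:\kappa]$ for some $d>1$, and your counting argument cannot distinguish $\kappa_w=\ell(\ovl{Y})$ from a proper extension of it. Since the theorem is nevertheless true in that generality, no degree count of this kind can close the argument. A secondary issue is that you invoke the ``unrefined'' Ruled Residue Theorem ($\kappa_w=\ell(t)$) as an input, although the statement to be proved contains precisely that assertion; at best this reduces the task to the refinement, which is where your argument breaks down.

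The paper itself only cites \cite[Theorem 3.3]{Ohm}, but the tools for a self-contained proof are already set up in Section 2, and they avoid any counting: take an Ohm element $Y$ with $X$ integral over $E[Y]$ (\Cref{improvetransele}), write an arbitrary $h\in E(X)\cap\mc{O}_w$ as $h=f/g$ with $f,g\in E[X]$, expand $f=\sum f_iY^i$ and $g=\sum g_jY^j$ with $f_i,g_j\in E[X]_{n-1}$ via \Cref{polyrep}, and normalise by a coefficient of minimal value. Then \Cref{P:Ohm-val-smallpol} identifies $w(f)$ and $w(g)$ with the minimal coefficient values, and \Cref{algebraicele} shows the residues of the normalised coefficients lie in $\ell$, so $\ovl{h}$ is a quotient of two elements of $\ell[\ovl{Y}]$ with nonzero denominator, i.e.\ $\ovl{h}\in\ell(\ovl{Y})$. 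This gives $\kappa_w\subseteq\ell(\ovl{Y})$ directly, with no appeal to defectlessness, uniqueness of extensions, or a prior form of the theorem. I would recommend abandoning the index identity and rebuilding the proof along these lines.
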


\begin{proof}
See \cite[Theorem 3.3]{Ohm} and its proof.
\end{proof}  

For $n \in \nat$, we set
$$E[X]_n  = \{ g \in E[X] \mid  \deg(g ) \leq n\}\,.$$

\begin{lem}\label{algebraicele}
Let $w$ be a residually transcendental extension of $v$ to $E(X)$ and let $n=\ind(w/E)$.
Let $Z\in \mc{O}_w$ be such that $Z=\frac{f}{g}$ for some $f,g\in E[X]_{n-1}\setminus\{0\}$.
Then the residue $\ovl{Z}$ is algebraic over $\kappa$.
\end{lem}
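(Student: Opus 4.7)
The plan is to argue by contradiction, using \Cref{polyrep} to force a strictly smaller value than $\ind(w/E) = n$.

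Suppose towards a contradiction that $\ovl{Z}$ is transcendental over $\kappa$. Then in particular $\ovl{Z} \neq 0$, hence $Z \in \mg{\mc O}_w$, and also $Z \notin E$ (since otherwise $\ovl{Z} \in \kappa$ would be algebraic over $\kappa$). Thus $Z$ is an admissible element in the definition of $\ind(w/E)$, which by definition gives
$$n = \ind(w/E) \leq [E(X):E(Z)].$$

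Next I would rewrite the given representation $Z = f/g$ in the canonical form demanded by \Cref{polyrep}. Let $h \in E[X]$ be a greatest common divisor of $f$ and $g$, and write $f = h f_1$, $g = h g_1$ with $f_1, g_1 \in E[X]$ coprime. If $c \in \mg{E}$ denotes the leading coefficient of $f_1$, then $Z = (c^{-1}f_1)/(c^{-1}g_1)$ with $c^{-1}f_1$ monic and coprime to $c^{-1}g_1$. Since cancelling $h$ and scaling by $c^{-1}$ do not increase degrees, we have
$$\deg(c^{-1}f_1),\ \deg(c^{-1}g_1) \ \leq\ \max\{\deg(f),\deg(g)\}\ \leq\ n-1.$$

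Applying \Cref{polyrep} to this coprime representation, I obtain
$$[E(X):E(Z)] \ =\ \max\{\deg(c^{-1}f_1),\deg(c^{-1}g_1)\}\ \leq\ n-1.$$
Combining with the inequality from the first paragraph yields $n \leq n-1$, a contradiction. Hence $\ovl{Z}$ must be algebraic over $\kappa$.

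There is no real obstacle here: the entire content of the statement is already encoded in \Cref{polyrep} (which translates the bound on numerator/denominator degrees into a bound on the field degree) together with the minimality built into the definition of $\ind(w/E)$. The only point requiring attention is the mild reduction to the coprime monic form that \Cref{polyrep} requires, which does not affect the degree bound.
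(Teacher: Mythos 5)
Your proof is correct and follows essentially the same route as the paper: both reduce to \Cref{polyrep} to bound $[E(X):E(Z)]$ by $\max\{\deg(f),\deg(g)\}\leq n-1$ and then invoke the minimality in the definition of $\ind(w/E)$. Your explicit reduction to a coprime monic representation is a careful (and harmless) elaboration of a step the paper passes over silently.
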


\begin{proof}
If $Z\in E$ or $w(Z)>0$, then $\ovl{Z} \in \kappa$.
Assume now that $Z\in E(X)\setminus E$ and $w(Z)=0$.
Then, by \Cref{polyrep}, $[E(X):E(Z)]\leq \max\{\deg(f),\deg(g)\}<n$. 
Since $n=\ind(w/E)$, we obtain that $\ovl Z$ is algebraic over $\kappa$.
\end{proof}

The following lemma is distilled from the proof of \cite[Theorem 3.3]{Ohm}.
The last part of the statement of \Cref{P:Ohm-val-smallpol} (concerning the value group) is also obtained in \cite[Lemma 2.2]{KG}.

\begin{lem}\label{P:Ohm-val-smallpol}
Let $w$ be a residually transcendental extension of $v$ to $E(X)$.
Let $n=\ind(w/E)$.
Then for any $m\in\nat$ and $g_0,\dots,g_m\in E[X]_{{n}-1}$ and for any Ohm element $Y$ of $w$ over $E$, we have that
$$w(g_0+g_1Y+\dots+g_mY^m)=\min\{w(g_i)\mid 0\leq i\leq m\}\,.$$
Furthermore $$\Gamma_w=w(E[X]_{n-1}\setminus\{0\})\,.$$
\end{lem}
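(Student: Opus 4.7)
The plan is to prove the Gauss-type equality for $Y$ first and then deduce the description of $\Gamma_w$ by reducing an arbitrary $\alpha\in\mg{E(X)}$ to polynomial form. For the equality, the inequality $w(\sum_{j=0}^m g_jY^j)\geq \min_j w(g_j)$ is immediate from the ultrametric inequality. For the reverse direction, set $\gamma=\min\{w(g_j)\mid g_j\neq 0\}$ and pick $j_0$ with $w(g_{j_0})=\gamma$; it then suffices to show that $h/g_{j_0}\in \mc O_w$ has nonzero residue in $\kappa_w$, where $h=\sum_j g_jY^j$. For each nonzero $g_j$ the ratio $g_j/g_{j_0}$ lies in $\mc O_w$ by the choice of $j_0$, and \Cref{algebraicele} tells us that its residue is algebraic over $\kappa$; it vanishes precisely when $w(g_j)>\gamma$. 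Since $Y\in \mg{\mc O}_w$, reducing modulo the maximal ideal of $\mc O_w$ yields
$$\overline{h/g_{j_0}}\;=\;\sum_{j\,:\,w(g_j)=\gamma}\overline{g_j/g_{j_0}}\cdot\bar Y^j,$$
a polynomial expression in $\bar Y$ whose coefficients lie in the relative algebraic closure of $\kappa$ in $\kappa_w$, and whose coefficient at $j_0$ equals $1$. Since $\bar Y$ is transcendental over $\kappa$, it is transcendental over this algebraic closure as well, so this polynomial does not vanish. Hence $w(h/g_{j_0})=0$, giving $w(h)=\gamma$.

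For the description of $\Gamma_w$, the inclusion $w(E[X]_{n-1}\smallsetminus\{0\})\subseteq \Gamma_w$ is trivial. For the reverse, apply \Cref{improvetransele} to replace $Y$ by an Ohm element $Y'$ of $w$ over $E$ for which $X$ is integral over $E[Y']$. By \Cref{polyrep}, $(1,X,\dots,X^{n-1})$ is an $E[Y']$-basis of $E[X,Y']$, so every element of $E[X,Y']$ can be written uniquely both as $\sum_{i=0}^{n-1} p_i(Y')X^i$ with $p_i\in E[Y']$ and, after rearranging, as a finite sum $\sum_j g_j Y'^j$ with $g_j\in E[X]_{n-1}$. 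Given $\alpha\in\mg{E(X)}$, expand $\alpha$ in the $E(Y')$-basis $(1,X,\dots,X^{n-1})$ of $E(X)$ and clear denominators to produce a nonzero $q(Y')\in E[Y']$ with $q(Y')\alpha\in E[X,Y']$. Writing $q(Y')=\sum_j c_j Y'^j$ with $c_j\in E$ and $q(Y')\alpha=\sum_j g_j Y'^j$ with $g_j\in E[X]_{n-1}$, the first part applied to $Y'$ gives $w(q(Y'))=v(c)$ for some $c\in\mg E$ and $w(q(Y')\alpha)=w(g)$ for some $g\in E[X]_{n-1}\smallsetminus\{0\}$. Hence $w(\alpha)=w(c^{-1}g)$ with $c^{-1}g\in E[X]_{n-1}\smallsetminus\{0\}$.

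The main obstacle is the nontrivial inequality in the Gauss-type equality, and it is overcome by combining \Cref{algebraicele} (which constrains the residues of ratios of elements of $E[X]_{n-1}$ to be algebraic over $\kappa$) with the transcendence of $\bar Y$ over $\kappa$ built into the definition of an Ohm element. Once this is in hand, the description of $\Gamma_w$ is a routine manipulation using the integrality afforded by \Cref{improvetransele} and the basis structure of \Cref{polyrep}.
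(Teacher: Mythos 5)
Your proof is correct. The first half is, in substance, the paper's own argument: you divide by a coefficient of minimal value, invoke \Cref{algebraicele} to place the residues $\overline{g_j/g_{j_0}}$ in the relative algebraic closure $\ell$ of $\kappa$ in $\kappa_w$, and use that $\ovl{Y}$ is transcendental over $\ell$ to see that the resulting polynomial expression is nonzero; the paper normalises by the \emph{first} coefficient of minimal value so that the expression is $\equiv\ovl{Y}^{j}\bmod\ovl{Y}^{j+1}$, whereas you normalise by an arbitrary one and argue that the coefficient at $j_0$ is $1$ -- the same idea. The second half takes a genuinely different (and somewhat cleaner) route. The paper first proves $\Gamma_w=w(E[X]\setminus\{0\})$ by a group-theoretic detour: the set $\Delta$ of values of nonzero polynomials is closed under addition, contains $\Gamma$, generates $\Gamma_w$, and $[\Gamma_w:\Gamma]<\infty$ by \cite[Corollary 3.2.3]{EP}, forcing $\Delta=\Gamma_w$; only then does it expand a polynomial in powers of $Y$ and apply the first part. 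You instead take an arbitrary $\alpha\in\mg{E(X)}$, expand it in the $E(Y')$-basis $(1,X,\dots,X^{n-1})$ furnished by \Cref{improvetransele} and \Cref{polyrep}, clear denominators to get $q(Y')\alpha\in E[X,Y']$, and apply the first part to both $q(Y')\alpha$ and $q(Y')$ (the latter having constant coefficients, which lie in $E[X]_{n-1}$ since $n\geq 1$). This handles rational functions directly and dispenses with the finiteness of $[\Gamma_w:\Gamma]$ and the semigroup argument, at the mild cost of invoking the basis statement of \Cref{polyrep} for $E(X)$ over $E(Y')$ rather than just for $E[X,Y']$ over $E[Y']$. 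Both arguments are complete; yours is arguably the more self-contained of the two.
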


\begin{proof}
Let $m\in\nat$ and $g_0,\dots,g_m\in E[X]_{n-1}$ and
let $Y$ be an Ohm element of $w$ over $E$.
Let $\gamma = \min\{ w(g_i)\mid 0\leq i\leq m\}$. Let $j\in\{0,\dots,m\}$ be such that $w(g_i)>\gamma$ for $0\leq i<j$ and 
$w(g_{j})=\gamma$.

For $i \in \{0 , \ldots, m\}$, we have $\frac{g_i}{g_{j}}\in \mc{O}_w$, and
since $g_j,g_i\in E[X]_{n-1}$,
it follows by \Cref{algebraicele} that $\ovl{\frac{g_i}{g_{j}}}$ is algebraic over $\kappa$.

Set $g = g_0+g_1Y+\dots+g_mY^m$ and $\vartheta=\frac{g}{g_j}$.
Note that $\vartheta\in \mc{O}_w$.
Let $\ell$ denote the relative algebraic closure of $\kappa$ in $\kappa_w$.
Then
$\ovl{\vartheta}\equiv \ovl{Y}^{j}\bmod \ovl{Y}^{j+1} \mbox{ in } \ell[\ovl{Y}]$.
Since $Y$ is an Ohm element of $w$ over $E$, we have that $\ovl Y$ is transcendental over $\ell$.
Since $\ell[\ovl{Y}]\subseteq\kappa_w$, 
we obtain in particular  
that $\ovl{\vartheta}\neq 0$ in $\kappa_w$, whereby $w(g)=w(g_{j})=\gamma$.
This shows the first part of the statement.

To show the second part, we note first that it follows from \Cref{improvetransele} that there exists an Ohm element $Y$ of $w$ over $E$ such that $X$ is integral over $E[Y]$.
Since $\ovl{Y}$ is transcendental over $\kappa$, by \Cref{gaussextdef} $w|_{E(Y)}$ is the Gauss extension of $v$ with respect to $Y$,
and in particular $w(\mg{E(Y)})=\Gamma$.
It follows by \cite[Corollary 3.2.3]{EP} that
$$[\Gamma_w:\Gamma]=[\Gamma_w:\Gamma_{w|_{E(Y)}}]\leq [E(X):E(Y)]<\infty\,.$$

Let $\Delta$ denote the image of $E[X]\setminus\{0\}$ under $w$.
Then $\Delta$ is closed under addition, $\Gamma\subseteq\Delta$ and $\Gamma_w$ is generated by $\Delta$.
Since $[\Gamma_w:\Gamma]$ is finite, it follows that $\Delta=\Gamma_w$.
This implies that $\Gamma_w=w(\{g\in E[X]\setminus\{0\}\})$.

Let $\gamma\in \Gamma_w$. 
There exists $g\in E[X]$ such that $w(g) =\gamma$.
By \Cref{polyrep}, there exist $m\in \nat$ and $g_0,\dots,g_m\in E[X]_{n-1}$ such that 
$g  =g_0+ \ldots +g_mY^m$.
Now the first part of the statement yields that $\gamma = w(g) =\min\{w(g_i)\mid 0\leq i\leq m\}$.
This shows that $\Gamma_w=w(E[X]_{n-1}\!\setminus\!\{0\})$.
\end{proof}

For a finite field extension $E'/E$ we denote by $\mathsf{N}_{E'/E}:E'\to E$ the norm map.
In the following statement we compare for a separable quadratic field extension $E'/E$ the Ohm index of a residually transcendental extension of $v$ to $E'(X)$  with the Ohm index of its restriction to $E(X)$.

\begin{lem}\label{SRTQ}
Assume that  $v(2)=0$.
Let $b \in \mg E\setminus \sq E$ and $E' =E(\sqrt{b})$. Let $w'$ a residually transcendental extension of $v$ to $E'(X)$ and $w=w'|_{E(X)}$.
Let $\ell$ be the relative algebraic closure of $\kappa$ in $\kappa_{w'}$.
Assume that $[\kappa_{w'}: \kappa_{w}] =2$ and $\ell\subseteq \kappa_{w}$.
Then $\ind(w/E) > \ind(w'/E')$ and there exists $\varphi\in E(X)\setminus\{0\}$ such that $\sqrt{b}\varphi$ is an Ohm element for $w'$ over $E'$.
%, in particular $\kappa_{w'}=\ell(\ovl{\sqrt{b}\varphi})$.
\end{lem}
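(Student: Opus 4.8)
The plan is to first fix the arithmetic of $w'$ over $w$, then exhibit the element $\sqrt{b}\varphi$ by hand and read off the strict inequality, leaving the verification that it can be made an \emph{Ohm} element as the delicate point. Throughout set $n=\ind(w/E)$ and $n'=\ind(w'/E')$. First I note that $\kappa_w/\kappa$ is transcendental (it sits in $\kappa_{w'}/\kappa$ with finite index $2$), so $w$ is residually transcendental and $n$ is defined. Since $[E'(X):E(X)]=2$ and $[\kappa_{w'}:\kappa_w]=2$, \cite[Corollary 3.2.3]{EP} forces $[\Gamma_{w'}:\Gamma_w]=1$ and shows that $w'$ is the \emph{unique} extension of $w$ to $E'(X)$. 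Hence the nontrivial $E(X)$-automorphism $\sigma$ of $E'(X)$ satisfies $w'\circ\sigma=w'$, preserves $\mc O_{w'}$, and induces the nontrivial $\bar\sigma\in\Gal(\kappa_{w'}/\kappa_w)$. Also $w(b)=2w'(\sqrt b)\in 2\Gamma_{w'}=2\Gamma_w$.

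Because $E$ is relatively algebraically closed in $E(X)$ we have $b\in\mg{E(X)}\setminus\sq{E(X)}$, and $w(b)\in 2\Gamma_w$, so \Cref{L:FI2} applied to $w$ on $E(X)$ with $a=b$ yields $u\in b\sq{E(X)}\cap\mg{\mc O}_w$ with $\kappa_{w'}=\kappa_w(\sqrt{\ovl u})$, and by uniqueness of $w'$ indeed $[\kappa_{w'}:\kappa_w]=2$. Writing $u=b\varphi^2$ with $\varphi\in\mg{E(X)}$, the element $\sqrt b\varphi\in\mg{\mc O}_{w'}$ satisfies $\ovl{\sqrt b\varphi}^{\,2}=\ovl u$; its residue is transcendental over $\kappa$, for otherwise it would lie in the relative algebraic closure $\ell$ of $\kappa$ in $\kappa_{w'}$, hence in $\kappa_w$ by hypothesis, contradicting $[\kappa_{w'}:\kappa_w]=2$. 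Thus for \emph{every} admissible choice $\sqrt b\varphi$ is a $w'$-unit with transcendental residue, and it is $\bar\sigma$-anti-invariant; note that replacing $\varphi$ by $\varphi\psi$ with $\psi$ a $w$-unit multiplies $\ovl{\sqrt b\varphi}$ by the arbitrary residue $\ovl\psi\in\mg{\kappa_w}$, so these residues range over all nonzero $\bar\sigma$-anti-invariants.

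For the strict inequality, observe $\tfrac12 w(b)=w'(\sqrt b)\in\Gamma_w$, and by \Cref{P:Ohm-val-smallpol} $\Gamma_w=w(E[X]_{n-1}\setminus\{0\})$, so there is $p\in E[X]_{n-1}$ with $w(p)=\tfrac12 w(b)$. Taking $\varphi=1/p$, the element $\sqrt b/p$ is a $w'$-unit with transcendental residue; in particular $p\notin E$ (a constant would give an algebraic residue), so $\deg(p)\ge 1$. Since $\sqrt b\in\mg{E'}$ we get $E'(\sqrt b/p)=E'(p)$, whence $[E'(X):E'(\sqrt b/p)]=[E(X):E(p)]=\deg(p)\le n-1$ by \Cref{polyrep}. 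Therefore $n'\le\deg(p)\le n-1<n$, proving $\ind(w/E)>\ind(w'/E')$.

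Finally, to produce an Ohm element, choose $p$ of least degree $d$ with $w(p)=\tfrac12 w(b)$; then $\sqrt b/p$ is a $w'$-unit with transcendental residue and $[E'(X):E'(\sqrt b/p)]=d$, so $n'\le d$, and $\sqrt b/p$ is an Ohm element exactly when $d=n'$. The hard part is this reverse bound $d\le n'$. I would first record that $w$ is genuinely ramified: if $w(b)\in 2\Gamma$ one rescales $b$ by a suitable $c\in\mg E$ to a $v$-unit $b_0=b/c^2$, whereupon $\ovl{\sqrt b\varphi}=\sqrt{\ovl{b_0}}\cdot\ovl{c\varphi}$ lies in $\ell\cdot\kappa_w=\kappa_w$, contradicting $[\kappa_{w'}:\kappa_w]=2$; hence $w(b)\notin 2\Gamma$, the restriction $w'|_{E'}$ is ramified over $v$ with residue field $\kappa$, and $\sqrt b$ realizes the new value $\tfrac12 w(b)\notin\Gamma$. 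The plan is then to combine this with the quantitative content of Ohm's theorem (\Cref{RRT}) — that an Ohm element's residue generates $\kappa_{w'}$ over $\ell$ and that $\ind(w'/E')=[\Gamma_{w'}:\Gamma_{w'|_{E'}}]\cdot[\ell:\kappa]$ — to compute $n'=\tfrac12[\Gamma_w:\Gamma]\,[\ell:\kappa]$ and to identify $d$ with it. Equivalently, one must show that $\varphi$ can be chosen so that $\sqrt{\ovl u}$ generates $\kappa_{w'}$ over $\ell$, i.e.\ that the fixed locus of the involution $\bar\sigma$ on the genus-zero curve $\kappa_{w'}/\ell$ is $\ell$-rational (so that its anti-invariant square class over $\kappa_w=\ell(\ovl Z)$ contains a degree-one function of the generator $\ovl Z$). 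I expect this rationality of the branch locus — which my sample computations confirm is forced precisely by the ramification $w(b)\notin 2\Gamma$ together with $\bar\sigma\neq\mathrm{id}$, and which fails in the ``conjugate branch points'' configuration that $\bar\sigma\neq\mathrm{id}$ must exclude — to be the main obstacle; once it is established, $\sqrt b/p$ is an Ohm element and the proof is complete.
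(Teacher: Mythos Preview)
Your argument for the strict inequality $\ind(w/E)>\ind(w'/E')$ is correct and matches the paper's: both use \Cref{P:Ohm-val-smallpol} to find $h\in E[X]_{n-1}$ with $w(h)=\pm\frac12 w(b)$, so that $\sqrt{b}\,h^{\pm1}$ is a $w'$-unit whose residue lies outside $\ell\subseteq\kappa_w$ and is therefore transcendental, giving $n'\le\deg h<n$. The genuine gap is in the second claim, the existence of an Ohm element of the form $\sqrt b\,\varphi$. You propose to take $p$ of \emph{minimal} degree $d$ with $w(p)=\frac12 w(b)$ and argue $d=n'$, but you do not complete this: the identity $\ind(w'/E')=[\Gamma_{w'}:\Gamma_{w'|_{E'}}]\cdot[\ell:\kappa]$ you invoke is not available from the cited statements (it amounts to defectlessness of $E'(X)/E'(Y)$ at $w'$ for an Ohm element $Y$, which is true but needs its own proof), and the ``rationality of the branch locus'' step remains a hope rather than an argument. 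As written, the proof of the Ohm-element part is incomplete.

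The paper bypasses this entirely by working in the opposite direction: instead of constructing a candidate and proving minimality, it starts from an \emph{arbitrary} Ohm element $Y=f/g$ of $w'$ over $E'$ and reshapes it. Write $f=f_1+\beta f_2$ and $g=g_1+\beta g_2$ with $\beta=\sqrt b$ and $f_i,g_i\in E[X]$. If $w'(f_1)=w'(\beta f_2)$, then $\beta f_2/f_1$ is a $w'$-unit whose residue generates $\kappa_{w'}$ over $\kappa_w$ (by \Cref{L:FI2}), hence lies outside $\ell$ and is transcendental; since $\deg f_1,\deg f_2\le\deg f\le n'$ we get $[E'(X):E'(\beta f_2/f_1)]\le n'$, so $\beta f_2/f_1$ is already an Ohm element of the required shape. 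The case $w'(g_1)=w'(\beta g_2)$ is symmetric. If neither equality holds, one may drop in each of $f$ and $g$ the summand of larger $w'$-value without affecting the Ohm property; after possibly replacing $Y$ by $Y^{-1}$, one is left with $Y$ equal to $\varphi$ or $\beta\varphi$ for some $\varphi\in E(X)^\times$. The alternative $Y=\varphi\in E(X)$ is excluded precisely by the inequality $n'<n$ you already proved, which forbids any Ohm element of $w'$ over $E'$ from lying in $E(X)$. This argument is short and uses nothing beyond what you have already established.
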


\begin{proof}
Since $[\kappa_{w'}: \kappa_{w}] =2$, it follows by \cite[Theorem~3.4.3]{EP} that  $\Gamma_{w'} = \Gamma_{w}$.
In particular  $w(b) \in 2\Gamma_{w'} = 2\Gamma_{w}$.
We set $\beta=\sqrt{b}\in E'$.
By the hypotheses, every element of $\kappa_{w'}\setminus\kappa_w$ is transcendental over $\kappa$, and by \Cref{L:FI2}, this applies in particular to $\beta\ovl{\varphi}$ for any $\varphi\in E(X)$ with $w'(\beta\varphi)=0$.

By \Cref{P:Ohm-val-smallpol}, there exists $h\in E[X]\setminus\{0\}$ such that $\deg(h) < \ind(w/E)$ and $w(h) = -\frac{1}{2}w(b)=-w'(\beta)$. 
Then $\ovl{\beta h}$ is transcendental over $\kappa$ and therefore
$$\ind(w/E) > \deg(h)= [E'(X): E'(\beta h)] \geq \ind(w'/E').$$
In particular, no element of $E(X)$ can be an Ohm element of $w'$ over $E'$. 

We set $n=\ind({w'}/E')$.
By \Cref{improvetransele}, there exists an Ohm element $Y$ of $w'$ over $E'$.
We write $Y = \frac{f}{g}$ for certain coprime polynomials $f, g \in E'[X]\setminus\{0\}$ with $f$ monic.
We obtain that $\max(\deg(f),\deg(g))=[E'(X):E'(Y)]=\ind(w'/E')=n$.
We write $f =f_1+\beta f_2$ and $g = g_1+\beta g_2$ with $f_1, f_2, g_1, g_2 \in E[X]$.

If $w'(f_1) =  w'(\beta f_2)$, then we have that $\ovl{\beta f_2f_1^{-1}}$ is transcendental over $\kappa$, and since $[E'(X):E'(\beta f_2f_1^{-1})]  \leq n=\ind(w'/E')$, we conclude that $\beta f_2f_1^{-1}$ is an Ohm element of $w'$ over $E'$. 
Similarly, if  $w'(g_1) =  w'(\beta g_2)$, then  $\beta g_2g_1^{-1}$ is an Ohm element of $w'$ over $E'$.

Assume now that  $w'(f_1) \neq w'(\beta f_2)$ and $w' (g_1) \neq w'(\beta g_2)$.
In this case the property of $Y$ to be an Ohm element of $w'$ over $E'$ will not be affected if we replace within each pair $(f_1,f_2)$ and $(g_1,g_2)$ the polynomial with the higher $w'$-value by zero.
We further may replace $Y$ by $Y^{-1}$ if necessary.
After these changes, $Y$ is of the form $\varphi$ or $\beta\varphi$ for some $\varphi\in\mg{E(X)}$.
Since $E(X)$ does not contain any Ohm element of $w'$ over $E'$, we get that $\beta\varphi$ is an Ohm element for $w'$ over $E'$. %In particular, it follows by \Cref{RRT} that $\kappa_{w'} = \ell (\ovl{\beta\varphi })$. 
\end{proof}

\section{Valuations on function fields of genus zero}%%%%
%%%%%%%%%%%%%%%%%%%%%%%%%%%%%%%%%%%%%%%%%%%%%%%%%%%%%%%%%

Let $E$ be a field of characteristic different from $2$.
We recall the description of function fields of genus zero as function fields of conics.

\begin{prop}\label{P:fufi-gen0-conic}
An algebraic function field $F/E$ is regular and of genus zero if and only if $F= E(x)(\sqrt{ax^2+b})$ for certain $x\in F$ and $a, b \in \mg E$.
\end{prop}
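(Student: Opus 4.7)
The plan is to establish both directions via the correspondence between regular algebraic function fields of genus zero over $E$ and smooth conics in $\pp^2_E$ defined by non-degenerate ternary quadratic forms, using that in characteristic different from $2$ every such form diagonalises.

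For the reverse implication, suppose $F = E(x)(\sqrt{ax^2+b})$ with $a, b \in \mg{E}$, where $x$ is necessarily transcendental over $E$. Setting $y = \sqrt{ax^2+b}$, $x = X/Z$ and $y = Y/Z$, I identify $F$ as the function field of the plane conic $C \subset \pp^2_E$ cut out by $aX^2 - Y^2 + bZ^2 = 0$. Its associated ternary quadratic form $\la a, -1, b\ra$ is non-degenerate, hence $C$ is smooth and geometrically integral, so $F/E$ is regular of genus zero.

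For the forward implication, I assume $F/E$ is regular of genus zero, and let $C/E$ be its smooth projective, geometrically integral model. Since $C$ has genus zero, the canonical divisor $K_C$ has degree $-2$, so $-K_C$ has degree $2 \geq 2g+1$, hence is very ample; by Riemann--Roch, its space of global sections has dimension $3$ over $E$, and the resulting closed immersion $C \hookrightarrow \pp^2_E$ exhibits $C$ as a smooth plane conic defined by a non-degenerate ternary quadratic form $q$ over $E$. As $\car E \neq 2$, $q$ diagonalises as $\la \alpha, \beta, \gamma\ra$ with $\alpha, \beta, \gamma \in \mg{E}$, so $C$ is defined by $\alpha X^2 + \beta Y^2 + \gamma Z^2 = 0$. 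Setting $x = X/Z$ and $y = Y/Z$ in $F = E(C)$ yields $\alpha x^2 + \beta y^2 + \gamma = 0$, which rearranges to $y^2 = ax^2 + b$ with $a = -\alpha\beta^{-1}$ and $b = -\gamma\beta^{-1}$ in $\mg{E}$; hence $F = E(x)(\sqrt{ax^2+b})$.

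I expect the forward direction to be the main obstacle, since it rests on the classical but nontrivial fact that a smooth geometrically integral projective curve of genus zero over $E$ embeds as a smooth plane conic via the anti-canonical linear system. I would expect the authors to cite this standard result (e.g.\ from Stichtenoth's book on algebraic function fields) rather than to reprove it. A more elementary alternative, if one wishes to avoid algebraic geometry, is to produce a degree-two rational subfield $E(t) \subseteq F$ directly, write $F = E(t)(\sqrt{r(t)})$ for a square-free $r \in E[t]$, and use Riemann--Hurwitz together with regularity of $F/E$ to force $\deg r \in \{1,2\}$; completing the square then yields the desired form, with the $\deg r = 1$ case reducing to the rational case and being parametrised via $x = s + s^{-1}$, $y = s - s^{-1}$ so that $y^2 = x^2 - 4$.
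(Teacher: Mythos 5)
The paper does not actually prove this statement: its ``proof'' is the single line ``See e.g.\ \cite[Sections 17 and 18]{Deu}'', exactly as you anticipated. Your argument is the standard one that such a citation points to, and it is essentially correct. The reverse direction is fine as written (and note that regularity and genus zero hold even in the degenerate event that $ax^2+b$ were a square in $E(x)$, since then $F=E(x)$ is rational; in fact for $a,b\in\mg{E}$ and $\car E\neq 2$ this never happens). The one step in the forward direction that deserves more care than you give it is the very first one: the existence of a \emph{smooth, geometrically integral} projective model. The paper's notion of ``regular'' is only that $E$ is relatively algebraically closed in $F$; over an imperfect base field of odd characteristic this does not formally give geometric integrality (which also needs separability), and the normal projective model of a function field need not be smooth. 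This is precisely why the classical sources (Deuring, Chevalley, Stichtenoth) run the argument purely at the level of divisors of $F/E$: choose an effective divisor $A$ of degree $2$ in the anticanonical class, get $\ell(A)=3$ from Riemann--Roch, take a basis $1,u,v$ of $L(A)$, observe that the six products $1,u,v,u^2,uv,v^2$ lie in $L(2A)$ which has dimension $5$, extract a ternary quadratic relation, and then use genus zero together with the relative algebraic closedness of $E$ to see that the relation is irreducible and the form nondegenerate, after which your completing-the-square step finishes. So your geometric phrasing is the right idea but either needs the char-$p$ separability issue addressed (or the standing hypotheses restricted to where it is automatic), or should be replaced by the divisor-theoretic version; your proposed elementary alternative via a degree-two subfield $E(t)$ and Riemann--Hurwitz has the same flavour but requires producing such a subfield, which is itself most easily done via the anticanonical system.
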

\begin{proof}
See e.g.~\cite[Sections 17 and 18]{Deu}.
\end{proof}

We assume in the sequel that $v$ is a \emph{nondyadic} valuation on $E$,
that is $v(2)=0$. 
We again denote by $\kappa$ the residue field of $v$ and by $\Gamma$ the value group of $v$.

\begin{lem}\label{L:hypel-nonruled}
Let $F/E(X)$ be a quadratic field extension and $w$ a residually transcendental extension of $v$ to $F$ such that $\kappa_w/\kappa$ is not ruled.
Let $\ell$ be the relative algebraic closure of $\kappa$ in $\kappa_{w}$ and $w_0=w|_{E(X)}$. 
Then $\Gamma_w=\Gamma_{w_0}$ 
 and $\ell\subseteq\kappa_{w_0}$.
\end{lem}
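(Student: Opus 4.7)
The plan is to analyse the restriction $w_0=w|_{E(X)}$ by means of Ohm's Theorem (\Cref{RRT}) and to derive both conclusions from the assumption that $\kappa_w/\kappa$ is not ruled. Since $[F:E(X)]=2$, the fundamental inequality \cite[Corollary~3.2.3]{EP} provides
\[
[\kappa_w:\kappa_{w_0}]\cdot[\Gamma_w:\Gamma_{w_0}]\leq 2\,.
\]
First I would observe that $w_0$ is itself residually transcendental: otherwise $\kappa_{w_0}/\kappa$ would be algebraic, and the inequality $[\kappa_w:\kappa_{w_0}]\leq 2$ would force $\kappa_w/\kappa$ to be algebraic, contradicting the residual transcendence of $w$. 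Hence \Cref{RRT} applies to $w_0$ and yields $\kappa_{w_0}=\ell_0(\ovl{Y})$, where $\ell_0$ is the relative algebraic closure of $\kappa$ in $\kappa_{w_0}$ and $Y$ is any Ohm element of $w_0$ over $E$.

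Next I would establish $\ell\subseteq\kappa_{w_0}$. Since $\kappa_w/\kappa$ is an algebraic function field, $\ell/\kappa$ is finite; in particular $\ovl{Y}$, being transcendental over $\kappa$, remains transcendental over $\ell$, so $\ell(\ovl{Y})$ is a rational function field over $\ell$. Computed inside $\kappa_w$, the compositum $\kappa_{w_0}\cdot\ell$ coincides with $\ell(\ovl{Y})$ (double inclusion), and by the fundamental inequality it must equal either $\kappa_{w_0}$ or $\kappa_w$. The second case would give $\kappa_w=\ell(\ovl{Y})$, a rational function field over the finite extension $\ell/\kappa$, hence $\kappa_w/\kappa$ would be ruled---contradiction. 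Therefore the compositum equals $\kappa_{w_0}$, which forces $\ell\subseteq\kappa_{w_0}$ and, by algebraicity of $\ell/\kappa$, even $\ell=\ell_0$.

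For the equality $\Gamma_w=\Gamma_{w_0}$ I would argue again by contradiction: if $[\Gamma_w:\Gamma_{w_0}]=2$, then the fundamental inequality gives $[\kappa_w:\kappa_{w_0}]=1$, so $\kappa_w=\kappa_{w_0}=\ell_0(\ovl{Y})=\ell(\ovl{Y})$ would once more be a rational function field over $\ell$, contradicting the non-ruledness of $\kappa_w/\kappa$. Hence $\Gamma_w=\Gamma_{w_0}$.

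The main point will be the identification of the compositum $\kappa_{w_0}\cdot\ell$ with the rational function field $\ell(\ovl{Y})$; for this I rely on the standard fact that the relative algebraic closure of $\kappa$ in the function field $\kappa_w/\kappa$ is a finite extension of $\kappa$, which ensures that $\ovl{Y}$ remains transcendental over $\ell$. Once this is in place, the non-ruledness hypothesis closes off both of the unwanted cases in the fundamental inequality in one stroke, and no finer information about the quadratic extension $F/E(X)$ (such as \Cref{L:FI2} or \Cref{SRTQ}) is needed at this point.
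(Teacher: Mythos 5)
Your proposal is correct and follows essentially the same route as the paper's proof: the fundamental inequality $[\kappa_w:\kappa_{w_0}]\cdot[\Gamma_w:\Gamma_{w_0}]\leq 2$ combined with Ohm's theorem applied to $w_0$, ruling out the unwanted cases because each would make $\kappa_w/\kappa$ ruled. The only cosmetic differences are that you treat the two conclusions in the opposite order and phrase the compositum step via the explicit description $\kappa_{w_0}=\ell_0(\ovl{Y})$ rather than via ruledness of $\kappa_{w_0}/\kappa$ directly.
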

\begin{proof}
By \cite[Corollary 3.2.3]{EP} we have $[\kappa_w:\kappa_{w_0}]\cdot [\Gamma_w:\Gamma_{w_0}]\leq [F:E(X)]=2$.
By \Cref{RRT}, $\kappa_{w_0}/\kappa$ is ruled.
Hence the hypothesis implies that $\kappa_w\neq \kappa_{w_0}$, and it follows that $\Gamma_w=\Gamma_{w_0}$.
As  $\kappa_w/\kappa$ is not ruled, $\kappa_w/\ell$ is not rational.
Since the extension $\kappa_{w_0}/\kappa$ is ruled, the extension $\kappa_{w_0}/(\ell\cap \kappa_{w_0})$ is rational, whereby $\ell\kappa_{w_0}/\ell$ is rational.
Therefore $\ell\kappa_{w_0}\subsetneq\kappa_w$.
Since $[\kappa_w:\kappa_{w_0}]=2$, it follows that $\ell\subseteq \kappa_{w_0}$.\end{proof}

\begin{lem}\label{uniqueextensionlem}
Let $F=E(X)(\sqrt{aX^2+b})$ with $a,b\in \mg{E}$.
Then the Gauss extension of $v$ to $E(X^2)$ with respect to $\frac{aX^2}{b}$ extends uniquely to a valuation $w$ on~$F$.
Moreover, we have $$\Gamma_w=\Gamma\cup (\mbox{$\frac{1}2$}v(a)+\Gamma)\cup (\mbox{$\frac{1}2$}v(b)+\Gamma)\cup(\mbox{$\frac{1}2$}v(ab)+\Gamma)\,,$$
and if $\Gamma_w\neq \Gamma$, then $\kappa_w/\kappa$ is a rational function field.
\end{lem}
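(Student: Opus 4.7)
Plan. The idea is to realise $F$ as a biquadratic extension of $E(X^2)$ and to lift the Gauss extension $v_0$ by two quadratic steps, each handled via \Cref{L:FI2}. Set $u=aX^2/b$, so that $E(u)=E(X^2)$ and, by \Cref{gaussextdef}, $\Gamma_{v_0}=\Gamma$ and $\kappa_{v_0}=\kappa(\ovl u)$. Since $X^2=bu/a$ and $aX^2+b=b(u+1)$ in $E(u)$, we obtain
\[F=E(u)(\sqrt{bu/a},\sqrt{b(u+1)}),\]
in particular $[F:E(u)]=4$. Setting $L_1=E(u)(\sqrt{bu/a})$, the element $bu/a$ is a square in $L_1$, so $b(u+1)$ and $a(u+1)/u$ represent the same class in $\mg{L_1}/\sq{L_1}$, and hence $F=L_1(\sqrt{b(u+1)})=L_1(\sqrt{a(u+1)/u})$.

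Uniqueness via the fundamental inequality. By $\sum_{w\mid v_0}e_wf_w\leq[F:E(u)]=4$ it suffices to exhibit one extension $w$ of $v_0$ to $F$ with $e_wf_w=4$. Along the tower $E(u)\subset L_1\subset F$, at the first step $v_0(bu/a)=v(b/a)$. When $v(b/a)\in 2\Gamma$, choose $c\in\mg E$ with $v(c^2)=v(b/a)$; then $bu/(ac^2)$ is a unit with residue a nonzero $\kappa$-multiple of $\ovl u$, which is not a square in $\kappa(\ovl u)$ since $\ovl u$ is transcendental over $\kappa$, and \Cref{L:FI2} yields a unique unramified $w_1$ with $f_{w_1}=2$. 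When $v(b/a)\notin 2\Gamma$, every extension to $L_1$ contains $\tfrac12 v(b/a)$ in its value group, so $e_{w_1}\geq 2$; combined with $e_{w_1}f_{w_1}\leq 2$ this gives a unique ramified $w_1$ with $e_{w_1}=2$. The second step $L_1\subset F$ is treated analogously using whichever of $\sqrt{b(u+1)}$ or $\sqrt{a(u+1)/u}$ has $w_1$-value in $2\Gamma_{w_1}$: after normalising by a square of $E$, the residue is a nonconstant rational function of $\ovl u$, hence a non-square in $\kappa_{w_1}$ (which is a rational function field in $\ovl u$ or in $\sqrt{\lambda\ovl u}$ over $\kappa$). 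Thus $e_wf_w=4$ in every case, proving uniqueness.

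Value group and residue field. A case distinction according to the classes of $v(a), v(b)\bmod 2\Gamma$ (together with $v(b/a)=v(b)-v(a)$) identifies in each case which of the two quadratic steps is ramified. The group $\Gamma_w$ is generated over $\Gamma$ by the contributions $\tfrac12 v(b/a)$ (first step, if ramified) and $\tfrac12 v(b)$ (second step, if ramified); since $\Gamma_w/\Gamma$ is $2$-torsion one has $\tfrac12 v(b/a)+\Gamma=\tfrac12 v(ab)+\Gamma$ (their difference is $v(a)\in\Gamma$), so the union $\Gamma\cup(\tfrac12 v(a)+\Gamma)\cup(\tfrac12 v(b)+\Gamma)\cup(\tfrac12 v(ab)+\Gamma)$ agrees with $\Gamma_w$ in every case. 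Finally, whenever $\Gamma_w\neq\Gamma$ at least one of the two steps is ramified, and the residue field $\kappa_w$ equals $\kappa(t)$ for some $t\in\{\sqrt{\lambda\ovl u},\sqrt{\mu(\ovl u+1)},\sqrt{\nu(\ovl u+1)/\ovl u},\ovl u\}$, with $\lambda,\mu,\nu\in\mg\kappa$; in each case $\ovl u$ is a rational function of $t$, so $\kappa_w=\kappa(t)$ is a rational function field over $\kappa$.

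Main obstacle. The delicate point is the second quadratic step when $w_1$ is already ramified: there one must pass between the two equivalent radicands $b(u+1)$ and $a(u+1)/u$ to arrange for the value to fall into $2\Gamma_{w_1}$, and then check that the resulting residue is genuinely a non-square in $\kappa_{w_1}$. The remaining verifications (the coset identities in $\Gamma_w/\Gamma$ reproducing the displayed union, and the rationality of $\kappa_w$ in the subcases with $\Gamma_w\neq\Gamma$) are routine once one tracks a single generator in a rational function field.
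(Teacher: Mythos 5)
Your overall strategy is the same as the paper's: bound via the fundamental inequality over $E(X^2)$ (where the Gauss extension has value group $\Gamma$ and residue field $\kappa(\ovl u)$) and exhibit one extension with $e\cdot f=[F:E(X^2)]=4$. The only structural difference is that you factor the degree-$4$ extension through $L_1=E(u)(\sqrt{bu/a})$ and apply \Cref{L:FI2} twice, whereas the paper argues directly on $[F:E(X^2)]$ with a three-way case split on the index of the displayed coset union over $\Gamma$. However, your treatment of the second quadratic step has a genuine hole. You claim (and repeat in your ``main obstacle'' paragraph) that one can always switch between the radicands $b(u+1)$ and $a(u+1)/u$ so as to make the value land in $2\Gamma_{w_1}$. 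This is false, and moreover the switch can never help: the two values are $v(b)$ and $v(a)$, and their difference $v(b/a)=2w_1(\sqrt{bu/a})$ always lies in $2\Gamma_{w_1}$, so either both values are in $2\Gamma_{w_1}$ or neither is. For a concrete failure take $\Gamma=\zz^2$ (ordered lexicographically), $v(a)=(1,0)$, $v(b)=(0,1)$: then $v(b/a)=(-1,1)\notin 2\Gamma$, so the first step is ramified with $\Gamma_{w_1}=\Gamma+\tfrac12(-1,1)\zz$, and one checks that neither $(1,0)$ nor $(0,1)$ lies in $2\Gamma_{w_1}=2\Gamma+(-1,1)\zz$. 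This is precisely the paper's case $[\Gamma':\Gamma]=4$, which your case analysis omits.

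The omission is repairable, and in fact the missing case is the easiest: if $w_1(b(u+1))\notin 2\Gamma_{w_1}$, then every extension of $w_1$ to $F$ is ramified of index $2$ with trivial residue extension, exactly as in your own treatment of the ramified first step, and $e\cdot f=4$ follows. You should also correct the selection rule for the unramified subcase: since both radicands have the same value modulo $2\Gamma_{w_1}$, ``whichever has value in $2\Gamma_{w_1}$'' selects nothing; what your normalisation by a square of $E$ actually requires is a radicand with value in $2\Gamma$, and one verifies that $v(b)\in 2\Gamma_{w_1}$ holds if and only if $v(a)\in2\Gamma$ or $v(b)\in2\Gamma$, so in the unramified subcase one of your two radicands does work. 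Finally, ``nonconstant rational function of $\ovl u$, hence a non-square'' is not a valid inference ($\ovl u^2$ is nonconstant); the correct reason is that the residues $\mu(\ovl u+1)$ and $\nu(\ovl u+1)/\ovl u$ have a zero of odd order at $\ovl u=-1$, a place that stays of odd order in $\kappa(\sqrt{\lambda\ovl u})$ since that quadratic extension ramifies only over $\ovl u=0$ and $\ovl u=\infty$. With these repairs your argument goes through and yields the same value group and residue field descriptions as the paper's proof.
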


\begin{proof}
Let $w_1$ denote the Gauss extension of $v$ to $E(X^2)$ with respect to $\frac{aX^2}{b}$ and let $w$ be an extension of $w_1$ to $F$.
Set $z = \ovl{\frac{aX^2}{b}}$ in $\kappa_{w_1}$.
By \Cref{gaussextdef}, we have that $\Gamma_{w_1} =\Gamma$, $\kappa_{w_1} =\kappa(z)$, and
$w(aX^2+b) = v(b) = w(aX^2)$.
Since $aX^2+b \in \sq F$, we obtain that $w(a), w(b),  w(aX^2+b) \in 2\Gamma_w$, whereby $\frac{1}2v(a),\frac{1}2v(b)\in \Gamma_w$.
We set
$$\Gamma'=\Gamma\cup (\mbox{$\frac{1}2$}v(a)+\Gamma)\cup (\mbox{$\frac{1}2$}v(b)+\Gamma)\cup(\mbox{$\frac{1}2$}v(ab)+\Gamma)\,.$$

Note that $[F:E(X^2)]=4$.
Since $\Gamma_{w_1}=\Gamma$, it follows by \cite[Corollary 3.2.3]{EP} that 
\begin{equation*}
[\Gamma_w:\Gamma]\cdot [\kappa_w:\kappa_{w_1}]  \leq   4 %\tag{\mbox{$\star$}}
\end{equation*}
and that, if equality holds here, then $w$ is the unique extension of $w_1$ to $F$.

Note that $\Gamma'\subseteq \Gamma_w$ and $[\Gamma':\Gamma]$ divides $4$.
Hence, in order to show that $w$ is the unique extension of $w_1$ to $F$ and that $\Gamma_w=\Gamma'$, it suffices now to show that 
\begin{equation*}
[\Gamma':\Gamma]\cdot [\kappa_w:\kappa_{w_1}]  \geq   4 \tag{\mbox{$\star$}}\,.
\end{equation*}

\underline{Case 1:} $[\Gamma':\Gamma]=4$. 
Then $(\star)$ holds trivially and we have $\kappa_w=\kappa_{w_1}=\kappa(z)$.
\medskip

\underline{Case 2:} $[\Gamma':\Gamma]=2$.
Then exactly one of the values $v(a), v(b)$ and $v(ab)$ lies in $2\Gamma$ and $[\kappa_w:\kappa_{w_1}]\leq 2$.
If $v(a) \in 2\Gamma$, then we choose $u \in a\sq E \cap \mg {\mc O}_v$ and obtain that 
$\kappa_{w_1}=\kappa(z)\subsetneq\kappa(\sqrt{\ovl{u}z({z+1})})\subseteq\kappa_w$, whereby $\kappa_w=\kappa(\sqrt{\ovl{u}z({z+1})})$.
If $v(b)\in 2\Gamma$, then we choose $u \in b\sq E \cap \mg {\mc O}_v$ and obtain that 
$\kappa_{w_1}=\kappa(z)\subsetneq\kappa(\sqrt{\ovl{u}({z+1})})\subseteq\kappa_w$, 
whereby $\kappa_w=\kappa(\sqrt{\ovl{u}({z+1})})$.
If $v(ab)\in 2\Gamma$, then we choose $u \in ab\sq E \cap \mg {\mc O}_v$ and obtain that 
$\kappa_{w_1}=\kappa(z)\subsetneq\kappa(\sqrt{\ovl{u}{z}})\subseteq\kappa_w$, 
whereby $\kappa_w=\kappa(\sqrt{\ovl{u}{z}})$.

Hence in each of the three subcases, $\kappa_w$ is a rational function field over $\kappa$ and $[\kappa_w:\kappa_{w_1}]=2$, which establishes $(\star)$.

\underline{Case 3:}
$[\Gamma':\Gamma]=1$. Then $v(a), v(b) \in 2\Gamma$.
We choose $u,\nu\in\mg {\mc O}_v$ with $uab,\nu b\in\sq{E}$ and obtain that
$\kappa_w = \kappa (\sqrt{\ovl{u}z}, \sqrt{\ovl{\nu}(z+1)} )$.
Since $\kappa_{w_1}=\kappa(z)$ where $z$ is transcendental over $\kappa$ and since $\ovl{u},\ovl{\nu}\in\kappa$,
we obtain that $[\kappa_w : \kappa_{w_1}] =4$.
This establishes $(\star)$, and it follows that $\Gamma_w=\Gamma'= \Gamma$.
\end{proof}
 
\begin{prop}\label{P:fufi-conic-gauss-residue}
Let $F=E(X)(\sqrt{aX^2+b})$ with $a,b\in \mg{E}$.
Let $w$ be an extension of $v$ to $F$ such that $w|_{E(X^2)}$ is the Gauss extension of $v$ with respect to~$\frac{aX^2}{b}$.
Then $\kappa_w/\kappa$ is a regular algebraic function field of genus zero. More precisely, we have the following two cases:
\begin{enumerate}[$(i)$]
\item If $v(a)\notin 2\Gamma$ or $v(b)\notin 2\Gamma$, then $\kappa_w/\kappa$ is a rational function field.
\item If $v(a),v(b)\in 2\Gamma$, then there exist $a_0\in a\sq{E}$ and $b_0\in b\sq{E}$ such that $v(a_0)=v(b_0)=0$, and for any choice of such $a_0$ and $b_0$ we have that $\kappa_w=\kappa(T)\left(\sqrt{\ovl{a}_0T^2+\ovl{b}_0}\right)$
for some $T\in\kappa_w$ which is transcendental over $\kappa$.
\end{enumerate}
\end{prop}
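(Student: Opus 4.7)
The plan is to handle part $(i)$ by invoking the explicit calculations already performed inside the proof of \Cref{uniqueextensionlem}, and to prove part $(ii)$ by a change of variable that reduces to the case $v(a) = v(b) = 0$ followed by a direct degree count.

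For part $(i)$, I would observe that the hypothesis ``$v(a) \notin 2\Gamma$ or $v(b) \notin 2\Gamma$'' is exactly the condition $[\Gamma':\Gamma] \geq 2$ in the notation of \Cref{uniqueextensionlem}, so we are in case 1 or case 2 of that proof. In both of those cases the proof explicitly identifies $\kappa_w$: in case 1 it is $\kappa(z)$, and in case 2 it is $\kappa(z)\bigl(\sqrt{\ovl u \cdot h(z)}\bigr)$ for some $\ovl u \in \mg\kappa$ and some $h \in \kappa[z]$ of degree at most two. In each instance the resulting field is a rational function field over $\kappa$, so part $(i)$ is already implicit in the earlier lemma.

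For part $(ii)$, with $v(a), v(b) \in 2\Gamma$, I would choose $\alpha, \beta \in \mg E$ with $v(\alpha^2) = v(a)$ and $v(\beta^2) = v(b)$, and set $a_0 := a\alpha^{-2}$, $b_0 := b\beta^{-2}$, and $T := (\alpha/\beta)X$. A short computation gives $F = E(T)\bigl(\sqrt{a_0 T^2 + b_0}\bigr)$, $E(T^2) = E(X^2)$, and $aX^2/b = (a_0/b_0)T^2$; since $a_0/b_0$ is a valuation unit, the Gauss-extension hypothesis becomes that $w|_{E(T^2)}$ is the Gauss extension of $v$ with respect to $T^2$. Torsion-freeness of value groups together with $w(T^2) = 0$ forces $w(T) = 0$, and transcendence of $\ovl{T^2} = \ovl T^2$ over $\kappa$ forces $\ovl T$ transcendental; so in fact $w|_{E(T)}$ is the Gauss extension of $v$ with respect to $T$, with residue field $\kappa(\ovl T)$.

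Then $a_0 T^2 + b_0 \in \mg{\mc O}_w$ has residue $\ovl{a_0}\ovl T^2 + \ovl{b_0} \in \kappa[\ovl T]$, a degree-two polynomial with discriminant $-4\,\ovl{a_0 b_0} \neq 0$ (using $\car(\kappa) \neq 2$), hence not a square in $\kappa(\ovl T)$. This produces a tower $\kappa(\ovl T^2) \subsetneq \kappa(\ovl T) \subsetneq \kappa(\ovl T)\bigl(\sqrt{\ovl{a_0}\ovl T^2 + \ovl{b_0}}\bigr) \subseteq \kappa_w$ of degree at least $4$. Combining case 3 of \Cref{uniqueextensionlem}, which gives $\Gamma_w = \Gamma$, with \cite[Corollary 3.2.3]{EP} applied to $[F:E(T^2)] = 4$ yields $[\kappa_w : \kappa(\ovl T^2)] \leq 4$, forcing equality throughout. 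Independence of the choice of $a_0, b_0$ follows since any two valid choices differ by squares of units of $\mc O_v$, so $\ovl{a_0}$ and $\ovl{b_0}$ are well-defined modulo $\sq\kappa$. The main obstacle I foresee is checking cleanly that the change of variable transports the Gauss-extension hypothesis and that the extension $w|_{E(T)}$ is genuinely the Gauss extension with respect to $T$; the rest reduces to routine degree bookkeeping against \Cref{uniqueextensionlem}.
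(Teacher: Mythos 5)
Your proposal is correct and follows essentially the same route as the paper: part $(i)$ is read off from \Cref{uniqueextensionlem}, and part $(ii)$ proceeds by the same rescaling $T=(\alpha/\beta)X$ to unit coefficients, the observation that $w|_{E(T)}$ is then the Gauss extension with respect to $T$, and the identification of $\kappa_w$ as the quadratic extension generated by $\sqrt{\ovl{a}_0\ovl{T}^2+\ovl{b}_0}$. The only cosmetic difference is that you close the argument with a degree-$4$ count over $E(T^2)$, whereas the paper applies \Cref{L:FI2} directly to the quadratic extension $F/E(T)$; both are valid.
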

\begin{proof}
$(i)$\, If $v(a)\notin 2\Gamma$ or $v(b)\notin 2\Gamma$, then
$\Gamma_w\neq \Gamma$, and it follows by \Cref{uniqueextensionlem} that $\kappa_w/\kappa$ is a rational function field.

$(ii)$\,
Assume that $v(a),v(b)\in 2\Gamma$.
Hence there exist $u,\nu\in \mg{E}$ such that $2v(u)=-v(a)$ and $2v(\nu)=-v(b)$. 
We set $a_0=au^2$, $b_0=b\nu^2$ and $X_0=\frac{\nu}{u}X$.
Then $a_0\in a\sq{E}$, $b_0\in b\sq{E}$ and $v(a_0)=v(b_0)=0$.
Moreover $E(X)=E(X_0)$ and 
$F=E(X_0)(\sqrt{a_0X_0^2+b_0})$. 
Let $w_0=w|_{E(X)}$. 
Note that $w(X_0^2)=w(\frac{aX^2}{b})=0$ and thus $w(X_0)=0$.
We conclude that $w_0$ is the Gauss extension of $v$ with respect to $X_0$.
Hence $T=\ovl{X_0}$ in $\kappa_{w_0}$ is transcendental over $\kappa$, and we obtain that $\kappa_{w_0}=\kappa(T)$  and 
$\kappa_w=\kappa_{w_0}\left(\sqrt{\ovl{a}_0T^2+\ovl{b}_0}\right)$.
In particular, $\kappa_w/\kappa$ is a regular algebraic function field of genus zero, by \Cref{P:fufi-gen0-conic}.
\end{proof}

\begin{thm}\label{non-ruled-val}
Let $F=E(X)(\sqrt{aX^2+b})$ with $a,b\in \mg{E}$.
% such that $v(b)\notin 2\Gamma$.
Let $w$ be an extension of $v$ to $F$ such that $\kappa_w/\kappa$ is transcendental and not ruled.
Then $w$ is the unique extension  to $F$ of the Gauss extension of $v$  on $E(X^2)$ with respect to~$\frac{aX^2}b$.
Furthermore $\Gamma_w=\Gamma$. % and $\kappa_w/\kappa$ is a regular algebraic function field of genus zero.
%of $w|_{E(X^2)}$ to $F$, and $w|_{E(X^2)}$ is the Gauss extension of 
%$v$ to $E(X^2)$ with respect to $\frac{aX^2}b$.
%Moreover, if $v(a)=v(b)=0$, then $w|_{E(X)}$ is the Gauss extension of $v$ with respect to $X$ and $$\kappa_w\simeq \kappa(\ovl{X})\left(\sqrt{\ovl{a}\ovl{X}^2+\ovl{b}}\right)\,.$$
\end{thm}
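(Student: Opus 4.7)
The plan is to identify $w_1 := w|_{E(X^2)}$ as the Gauss extension of $v$ with respect to $aX^2/b$. Granting this identification, \Cref{uniqueextensionlem} yields uniqueness of the extension of $w_1$ to $F$, and \Cref{P:fufi-conic-gauss-residue} combined with the non-ruledness hypothesis excludes its case $(i)$ (which would produce a rational, hence ruled, residue), forcing $v(a), v(b) \in 2\Gamma$; then the value-group formula of \Cref{uniqueextensionlem} yields $\Gamma_w = \Gamma$.

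To identify $w_1$, I would set $w_0 := w|_{E(X)}$ and denote by $\ell$ the relative algebraic closure of $\kappa$ in $\kappa_w$. \Cref{L:hypel-nonruled} applied to the quadratic extension $F/E(X)$ yields $\Gamma_w = \Gamma_{w_0}$, $[\kappa_w:\kappa_{w_0}] = 2$, and $\ell \subseteq \kappa_{w_0}$. Setting $t = \sqrt{aX^2+b}$, \Cref{L:FI2} applied to $F/E(X)$ produces a unit $u \in (aX^2+b)\sq{E(X)} \cap \mg{\mc O}_{w_0}$ with $\kappa_w = \kappa_{w_0}(\sqrt{\bar u})$. The proof then proceeds by case analysis on $w(aX^2)$ versus $w(b)$, and, when these are equal, on the residue $\overline{aX^2/b}$.

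If $w(aX^2) \ne w(b)$, then $t/X$ (or a suitable rescaling thereof) has a residue equal to $\pm\sqrt{\bar a}$ (respectively $\pm\sqrt{\bar b}$) up to a factor in $\kappa_{w_0}^\times$; since this residue lies in $\kappa_w$ and is algebraic over $\kappa$, it lies in $\ell \subseteq \kappa_{w_0}$, making $\bar u$ a square in $\kappa_{w_0}$, and \Cref{L:FI2} then gives $\kappa_w = \kappa_{w_0}$, contradicting $[\kappa_w:\kappa_{w_0}] = 2$. In the case $w(aX^2) = w(b)$, if $\overline{aX^2/b}$ is algebraic over $\kappa$ and differs from $-1$, an analogous argument with $\sqrt{\bar b(\overline{aX^2/b}+1)}$ produces the same contradiction. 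In the remaining subcase $\overline{aX^2/b} = -1$, the valuation $w_1$ is of augmented-Gauss type specializing $aX^2/b$ to $-1$, and an explicit analysis of its extension to $F$, using the biquadratic structure $F = E(X^2)(X, t)$ and an Ohm element for $w_1$ obtained by rescaling $aX^2+b$ to a $w_1$-unit, shows that $\kappa_w$ becomes a rational function field over $\kappa$, hence is ruled, contradicting the hypothesis. Only the case $w(aX^2) = w(b)$ with $\overline{aX^2/b}$ transcendental over $\kappa$ survives, and this says precisely that $w_1$ is the Gauss extension with respect to $aX^2/b$.

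The main obstacle is the subcase $\overline{aX^2/b} = -1$, where $\bar u$ is not automatically a square in $\kappa_{w_0}$ and \Cref{L:FI2} does not directly yield a contradiction. The argument there must unwind the augmented structure of $w_1$ on $E(X^2)$ and show concretely that the induced quadratic extension $\kappa_w/\kappa_{w_1}$ collapses to a rational function field over $\kappa$ — a computation that parallels the analysis in \Cref{uniqueextensionlem} and case $(i)$ of \Cref{P:fufi-conic-gauss-residue}.
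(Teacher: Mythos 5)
Your overall reduction agrees with the paper's: show that $w|_{E(X^2)}$ is the Gauss extension of $v$ with respect to $\frac{aX^2}{b}$, then let \Cref{uniqueextensionlem} deliver uniqueness and the value group, with the non-ruledness hypothesis excluding $\Gamma_w\neq\Gamma$. The gap lies in how you eliminate the degenerate cases. Your pivotal claim is that the residue generating $\kappa_w$ over $\kappa_{w_0}$ equals ``$\pm\sqrt{\bar a}$ (resp.\ $\pm\sqrt{\bar b}$) up to a factor in $\mg{\kappa_{w_0}}$'' and is therefore algebraic over $\kappa$. This presupposes that the square class of $a$ (resp.\ $b$, resp.\ $b(\frac{aX^2}{b}+1)$) in $E(X)$ is represented by a $w_0$-unit whose residue is algebraic over $\kappa$ modulo $\sq{\kappa_{w_0}}$. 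That is automatic when $v(a),v(b)\in 2\Gamma$, but at this stage one only knows $w(aX^2+b)\in 2\Gamma_{w_0}$, and $\Gamma_{w_0}$ may properly contain $\Gamma$. If, say, $v(b)\in 2\Gamma_{w_0}\setminus 2\Gamma$, then every unit in $b\sq{E(X)}\cap\mg{\mc O}_{w_0}$ has the form $bh^2$ with $h\in E(X)\setminus E$, and its residue can be transcendental over $\kappa$ (for instance, over $E=k(s)$ with the $s$-adic valuation and $w_0$ containing the Gauss extension with respect to $X^4/s^3$, the unit $s\cdot(X^2/s^2)^2$ has residue $\ovl{X^4/s^3}$). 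Ruling out this ramified situation is essentially the content of the theorem --- it is where $\Gamma_w=\Gamma$ comes from --- so it cannot be taken for granted. The paper circumvents it by two devices absent from your argument: for $w(aX^2)\neq w(b)$ and for $\ovl{aX^2/b}=-1$ it passes to $F(\sqrt{c})$ with $c\in\{a,b,-ab\}$, which is ruled over $E$, and combines \Cref{RRT} with \Cref{L:FI2} to show that \emph{every} unit in $c\sq{F}\cap\mg{\mc{O}_w}$ has non-square residue in $\kappa_w$, while exhibiting one with residue $1$; and for the remaining case it invokes \Cref{SRTQ} to produce an Ohm element of the form $\sqrt{b}\varphi$ over $E(\sqrt{b})$ and deduces that $\kappa_w$ would be ruled.

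A second, related problem is that you locate the main obstacle in the subcase $\ovl{aX^2/b}=-1$. In the paper's scheme this is one of the easy cases (it is exactly $w(aX^2)=w(b)<w(aX^2+b)$, dispatched by $u=-\frac{aX^2}{b}\in -ab\sq{F}\cap\mg{\mc{O}_w}$ with $\ovl{u}=1$ and the ruledness of $F(\sqrt{-ab})/E$), and the conclusion you aim for there --- that $\kappa_w$ is a \emph{rational} function field over $\kappa$ --- is both unsupported by an argument and stronger than what holds in general (only ruledness over $\kappa$ can be expected). The genuinely delicate subcase is $\ovl{aX^2/b}$ algebraic over $\kappa$ and different from $0$ and $-1$, which you wave through as ``analogous''; it inherits the algebraicity gap above and is precisely where \Cref{SRTQ} is needed.
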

\begin{proof}
If $w(b)>w(aX^2)$, then 
$w(aX^2)=w(aX^2+b)$ and for $u=1+\frac{b}{aX^2}$ we obtain that 
$u\in a\sq{F}\cap\mg{\mc{O}_w\!\!}$ and $\ovl{u}=1$.

If $w(b)<w(aX^2)$,
then $w(b)=w(aX^2+b)$ and for $u=1+\frac{aX^2}{b}$ we obtain that $u\in b\sq{F}\cap\mg{\mc{O}_w\!\!}$ and $\ovl{u}=1$.

If $w(b)=w(aX^2)<w(aX^2+b)$, then
for $u=-\frac{aX^2}b=1-\frac{aX^2+b}b$ we obtain that $u\in -ab\sq{F}\cap\mg{\mc{O}_w\!\!}$ and $\ovl{u}=1$.

We will now show that none of these three inequalities is possible.
Let $c$ be any of the elements $a,b$ and $-ab$.
Then  $F(\sqrt{c})$ is a rational function field over $E(\sqrt{c})$, by~\cite[Proposition 45.1]{EKM}. Hence $F(\sqrt{c})/E$ is ruled.
Consider an extension $w'$ of $w$ to $F(\sqrt{c})$. 
It follows by \Cref{RRT} that the extension $\kappa_{w'}/\kappa$ is ruled.
Since $\kappa_w/\kappa$ is not ruled, we obtain that $\kappa_{w}\subsetneq\kappa_{w'}$.
It follows by \Cref{L:FI2} that, for any $u\in c\sq{F}\cap\mg{\mc{O}_w\!\!}$, we have $\ovl{u}\notin\sq{\kappa_w\!\!}$, so in particular $\ovl{u}\neq 1$.
This shows that none of the three inequalities above can hold.
Hence, so far we have shown that $$w(b)=w(aX^2)=w(aX^2+b)\in 2\Gamma_{w}\,.$$

%For any 
% $u\in (aX^2+b)\sq{E(X)}\cap \mg{\mc{O}}_{w}$
%we have $\kappa_w=\kappa_{w_0}(\sqrt{\ovl{u}})$, by \Cref{L:FI2}, and
%$\ovl{u}\in\mg{\kappa}_{w_0}\setminus \mg{\ell}\sq{\kappa}_{w_0}$, by \Cref{L:hypel-nonruled}.

Let $w_0=w|_{E(X)}$.
By \Cref{L:hypel-nonruled} we have $\Gamma_{w}=\Gamma_{w_0}$. Hence $w(aX^2+b)\in 2\Gamma_{w_0}$.
Let $\ell$ denote the relative algebraic closure of $\kappa$ in $\kappa_{w}$.
By \Cref{L:hypel-nonruled} we have that $\ell\subseteq\kappa_{w_0}$.
For any $u\in(aX^2+b)\sq{E(X)}\cap \mg{\mc{O}}_{w}$, 
we have $\kappa_w=\kappa_{w_0}(\sqrt{\ovl{u}})\neq \kappa_{w_0}$ by \Cref{L:FI2}, and since $\ell\subseteq \kappa_{w_0}$ this implies that $\sqrt{\ovl{u}}\notin \ell$, whereby $\ovl{u}\notin \ell$.

We set $w_{\star}=w|_{E(X^2)}$ and $Z=\frac{aX^2}{b}$. 
Then $Z,Z+1\in\mg{\mc{O}}_w$ and $E(Z)=E(X^2)$. 
We claim that $w_{\star}$ is the Gauss extension of $v$ with respect to~$Z$. 
By \Cref{uniqueextensionlem}, this implies that $w$ is the unique extension of $w_\star$ to $F$ and $\Gamma_w=\Gamma$.
To show the claim we need to show that $\ovl{Z}\notin\ell$.

Suppose on the contrary that $\ovl{Z}\in\ell$.
Then $\ovl{Z},\ovl{Z}+1\in\mg{\ell}$.
We fix an element $u\in(aX^2+b)\sq{E(X)}\cap \mg{\mc{O}}_{w}$ and set $y=u(Z+1)$.
Let $w '$ denote an extension of $w_0$ to $E(\sqrt{b})(X)$. 
As $y\in b\sq{E(X)}\cap\mg{\mc{O}}_{w'}$, it follows 
by \Cref{L:FI2} that $\kappa_{w'} = \kappa_{w_0}(\sqrt{\ovl{y}})$.
Since $\ovl{Z}+1\in\mg{\ell}$ and $\ovl{u}\notin \ell$ we conclude that $\ovl{y}\notin \mg{\ell}$.
Therefore $\ell$ is relatively algebraically closed in $\kappa_{w'}$ and $[\kappa_{w'}: \kappa_{w_0}] =2$.
Hence \Cref{SRTQ} applies and yields an an Ohm element of $w'$ over $E(\sqrt{b})$  of the form $T =\sqrt{b} \varphi$ for some $\varphi \in E(X)\setminus \{0\}$.
It follows by \Cref{RRT} that $\kappa_{w'} = \ell(\ovl{T})$.
Since
$[\ell(\ovl{T}) : \ell(\ovl{T}^2)] =2= [\kappa_{w'}: \kappa_{w_0}]$
and $\ovl{T}^2\in\kappa_{w_0}$, it follows that $\kappa_{w_0}  = \ell(\ovl{T}^2)$.
As $(Z+1)T^2=(aX^2+b)\varphi^2\in (aX^2+b)\sq{E(X)}\cap \mg{\mc{O}}_w$ and $\ovl{Z}\in\ell$, we obtain using \Cref{L:FI2} that 
$$\kappa_w=\kappa_{w_0}\left(\sqrt{({\ovl{Z}+1}){\ovl{T}^{2}}}\right)=\ell(\ovl{T}^2)\left(\sqrt{({\ovl{Z}+1})\ovl{T}^{2}}\right) =\ell\left(\sqrt{({\ovl{Z}+1})\ovl{T}^{2}}\right).$$
In particular $\kappa_w/\kappa$ is ruled, which contradicts the hypothesis.
Thus $\ovl{Z}\notin \ell$.
\end{proof}

\begin{cor}\label{C:main1}
Let $F/E$ be a function field of genus zero and let $v$ be a valuation on $E$ with residue field $\kappa$ of characteristic different from $2$. 
Then $v$ has at most one extension to $F$ whose residue field is transcendental and not ruled over $\kappa$.
Moreover, if such an extension exists, then it is unramified and its residue field is a regular algebraic function field of genus zero over $\kappa$.
\end{cor}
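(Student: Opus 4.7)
The plan is to derive this corollary as a direct consequence of \Cref{non-ruled-val}, combined with the structural results \Cref{P:fufi-gen0-conic} and \Cref{P:fufi-conic-gauss-residue}.

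First, I would reduce to the case where $F/E$ is regular, by replacing $E$ with its relative algebraic closure $E_0$ inside $F$. Any extension $w$ of $v$ to $F$ satisfying the hypotheses restricts to an extension $v_0$ of $v$ to $E_0$, and the fact that $\kappa_w/\kappa$ is not ruled forces $\kappa_w/\kappa_{v_0}$ to be not ruled either: since $[\kappa_{v_0}:\kappa]$ is finite, any rational presentation of $\kappa_w$ over a finite extension of $\kappa_{v_0}$ would give one over a finite extension of $\kappa$, contradicting the assumption.

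Assuming now $F/E$ is regular, \Cref{P:fufi-gen0-conic} allows me to write $F=E(X)(\sqrt{aX^2+b})$ for suitable $X\in F$ and $a,b\in\mg E$. For any extension $w$ of $v$ to $F$ with $\kappa_w/\kappa$ transcendental and not ruled, \Cref{non-ruled-val} pins $w$ down completely: it must be the unique extension to $F$ of the Gauss extension of $v$ on $E(X^2)$ with respect to $\frac{aX^2}{b}$, and it satisfies $\Gamma_w=\Gamma$. The uniqueness of this lift immediately yields the ``at most one'' clause of the corollary, and the equality $\Gamma_w=\Gamma$ is the unramifiedness claim. Finally, since $w|_{E(X^2)}$ is identified as that particular Gauss extension, \Cref{P:fufi-conic-gauss-residue} applies and asserts that $\kappa_w/\kappa$ is a regular algebraic function field of genus zero.

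Essentially all the content has already been absorbed into \Cref{non-ruled-val}, so the only step that demands any real deliberation is the opening reduction to the regular case. I do not anticipate a serious obstacle there: the persistence of the ``not ruled'' property under a finite algebraic extension of the base is routine, and once inside the regular setting the three assertions of the corollary simply read off from the theorems already established.
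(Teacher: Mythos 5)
Your second and third paragraphs are precisely the paper's proof: write $F=E(X)(\sqrt{aX^2+b})$ via \Cref{P:fufi-gen0-conic}, obtain uniqueness and $\Gamma_w=\Gamma$ from \Cref{non-ruled-val}, and read off the structure of $\kappa_w/\kappa$ from \Cref{P:fufi-conic-gauss-residue}. That part is correct and is all that is needed, because the corollary is to be read under the paper's standing assumption (made in the introduction) that $F/E$ is regular --- which is also what makes \Cref{P:fufi-gen0-conic} applicable at all.

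Your opening reduction to the regular case, however, does not work, and the step you wave off as routine is exactly where it breaks. After replacing $E$ by its relative algebraic closure $E_0$ in $F$, \Cref{non-ruled-val} pins down $w$ only among the extensions of a \emph{fixed} valuation $v_0$ on $E_0$; it says nothing about two candidates $w_1,w_2$ whose restrictions to $E_0$ are two \emph{distinct} extensions of $v$. This is not a removable technicality but a genuine failure of the statement without regularity: take $E=\qq(t)$ with the $t$-adic valuation (so $\kappa=\qq$), $E_0=E(\sqrt{1+t})$, in which $v$ splits into two extensions $v_1\neq v_2$, each with residue field $\qq$, and $F=E_0(X)(\sqrt{-X^2-1})$. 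Since $(-1,-1)_{\qq}$ is a division algebra, \Cref{T:main} (applied over $E_0$ to each $v_i$) produces two distinct extensions of $v$ to $F$ whose residue fields are transcendental and not ruled over $\qq$, contradicting the ``at most one'' clause; and if instead $v$ is inert in $E_0$, the residue field of any such $w$ contains $\kappa_{v_0}\supsetneq\kappa$ and hence is not regular over $\kappa$, contradicting the last clause. So the corollary genuinely requires $F/E$ regular; drop the reduction and invoke that hypothesis directly.
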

\begin{proof}
In view of \Cref{P:fufi-gen0-conic}, this follows immediately from 
\Cref{non-ruled-val} and \Cref{P:fufi-conic-gauss-residue}.
\end{proof}

%%%%%%%%%%%%%%%%%%%%%%%%%%%%%%%%%%%%%%%%%%%%%
\section{Relation to quaternion algebras} %%%
%%%%%%%%%%%%%%%%%%%%%%%%%%%%%%%%%%%%%%%%%%%%%

Let $E$ be a field of characteristic different from $2$. 
To elements $a,b\in\mg{E}$ one associates the $E$-quaternion algebra denoted by $(a,b)_E$, which is defined as a $4$-dimensional $E$-algebra with basis $(1,i,j,k)$ endowed with the multiplication induced by the relations
$i^2=a$, $j^2=b$ and $k=ij=-ji$. 
Given an $E$-quaternion algebra $Q=(a,b)_E$, with $a,b\in\mg{E}$, we denote by $E(Q)$ the function field $E(X)(\sqrt{aX^2+b})$ over $E$, which is the function field of the projective conic over $E$ given by $aX^2+bY^2-Z^2=0$.
(More intrinsically, $E(Q)$ can be defined as the function field of the Severi-Brauer variety given by $Q$, see \cite[Section 5.4]{GS06}.)
A well-known theorem due to Witt (see \cite[Section 1.4]{GS06}) asserts that the isomorphism class of an $E$-quaternion algebra $Q$  
determines and is determined by the isomorphism class of the algebraic function field $E(Q)/E$. 
Hence it follows by \Cref{P:fufi-gen0-conic} that any regular function field of genus zero is isomorphic over $E$ to $E(Q)$ for some $E$-quaternion algebra $Q$.

Note that, as a consequence of Witt's Theorem, the $E$-quaternion algebra $Q$ is split (i.e.~isomorphic to the matrix algebra $\mathbb{M}_{2}(E)$) if and only if $E(Q)/E$ is a rational function field, and if this is not the case, then $Q$ is a division algebra, by \cite[Proposition 1.1.7]{GS06}.

Let now $v$ be again a valuation on $E$ with residue field $\kappa$ of characteristic different from $2$ and with value group $\Gamma$.

We refer to \cite[Chapter 1]{TW15} for basic facts from valuation theory over division rings. 
Recall that the axioms for defining a valuation can be formulated over any ring, but they imply that the ring has no zero-divisors. 
In this sense, given an $E$-algebra $D$, one can ask whether a valuation on $D$ (or an extension of $v$ to $D$) exists, but a positive answer will always require $D$ to have no zero-divisors, and thus a division algebra if $D$ is finite-dimensional.

\begin{lem}\label{L:residue-quatdivalg}
Let $a,b\in\mg{E}$ be such that $v(a)=v(b)=0$.
Let $Q=(a,b)_E$ and $\ovl{Q}=(\ovl{a},\ovl{b})_\kappa$.
If $\ovl{Q}$ is split, then $v$ does not extend to $Q$.
If $\ovl{Q}$ is a division algebra,
then $v$ has an unramified extension to $Q$.
\end{lem}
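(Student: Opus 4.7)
The plan is to treat the two implications separately. For the first part, I would argue by contradiction: suppose $w$ is a valuation on $Q$ extending $v$. Then $Q$ is necessarily a division algebra, and the associated residue algebra $\kappa_w$ is a division algebra over $\kappa$. Because the value group $\Gamma_w$ is a totally ordered (hence torsion-free) abelian group, the relations $2w(i)=v(a)=0$ and $2w(j)=v(b)=0$ force $w(i)=w(j)=0$, so that $i,j\in\mg{\mc{O}}_w$. Their residues $\ovl{i},\ovl{j}\in\kappa_w$ satisfy $\ovl{i}^2=\ovl{a}$, $\ovl{j}^2=\ovl{b}$, $\ovl{i}\,\ovl{j}=-\ovl{j}\,\ovl{i}$, which yields a unital $\kappa$-algebra homomorphism $\ovl{Q}\to\kappa_w$. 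Since $\ovl{Q}$ is central simple, this map is injective. But if $\ovl{Q}$ is split then $\ovl{Q}\simeq\mathbb{M}_2(\kappa)$ has zero divisors, contradicting that $\kappa_w$ is a division algebra.

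For the converse I would construct the extension explicitly via the reduced norm. Define $w\colon\mg{Q}\to\Gamma$ by $w(x)=\tfrac{1}{2}v(\mathsf{Nrd}(x))$, where for $x=x_0+x_1i+x_2j+x_3k$ one has $\mathsf{Nrd}(x)=x_0^2-ax_1^2-bx_2^2+abx_3^2$. The crucial identity to establish is
\[w(x)=\min\{v(x_0),v(x_1),v(x_2),v(x_3)\}\,.\]
To prove this, pick $c\in\mg{E}$ realising the minimum on the right; then $c^{-1}x$ has all coefficients in $\mc{O}_v$ and at least one coefficient a unit, so its reduction $\ovl{c^{-1}x}$ is a nonzero element of $\ovl{Q}$. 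Because $\ovl{Q}$ is assumed to be a division algebra, $\mathsf{Nrd}_{\ovl{Q}}(\ovl{c^{-1}x})\neq 0$, and comparison with $\ovl{\mathsf{Nrd}(c^{-1}x)}$ gives $v(\mathsf{Nrd}(c^{-1}x))=0$, yielding the identity.

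With this formula in hand the valuation axioms fall out immediately: multiplicativity $w(xy)=w(x)+w(y)$ follows from the multiplicativity of $\mathsf{Nrd}$, and the ultrametric inequality $w(x+y)\geq\min\{w(x),w(y)\}$ is transparent from the minimum-formula applied coordinatewise. The formula also shows that $w|_E=v$ and that $w$ takes values in $\Gamma=v(\mg{E})$, so the extension is unramified; in particular $\mathsf{Nrd}(x)\neq 0$ for every nonzero $x\in Q$, so $Q$ is a division algebra and $w$ is indeed a genuine valuation.

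The technical heart of the argument is the identification of $\tfrac{1}{2}v(\mathsf{Nrd}(\cdot))$ with the coefficient-wise minimum, and this is precisely the step where the hypothesis on $\ovl{Q}$ is indispensable. Applying only the standard triangle inequality to $v(x_0^2-ax_1^2-bx_2^2+abx_3^2)$ yields merely the inequality $w(x)\geq\min v(x_\ell)$; the reverse inequality can fail when $\ovl{Q}$ is split, which is exactly the obstruction that produces the nonexistence statement in the first part of the lemma.
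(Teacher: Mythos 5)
Your proof is correct, and both halves take a genuinely different route from the paper. For the non-existence statement, the paper exhibits a concrete maximal subfield $L=E(\sqrt{ay^2+b})$ of $Q$ (with $\ovl{a}\,\ovl{y}^2+\ovl{b}$ a nonzero square in $\kappa$) to which $v$ extends in two distinct ways, and then invokes the Tignol--Wadsworth criterion that a valuation on a division algebra must induce the unique extension on every subfield; you instead observe that $2w(i)=v(a)=0$ forces $w(i)=0$ in a torsion-free ordered value group, reduce $i$ and $j$, and embed the simple algebra $\ovl{Q}$ into the residue division ring, so that the zero divisors of a split $\ovl{Q}$ give the contradiction. For the existence statement, the paper again cites the Tignol--Wadsworth extension theorem (unique extendability of $v$ to every maximal subfield of $Q$) and then uses the fundamental inequality together with the identification of the residue algebra with $\ovl{Q}$ to conclude unramifiedness, whereas you construct the valuation explicitly as $\tfrac12 v(\mathsf{Nrd}(\cdot))$ and verify the axioms via the coefficient-wise minimum formula; your proof of that formula correctly isolates the point where the hypothesis enters, namely that the reduced norm of $\ovl{Q}$ is anisotropic, and it also yields directly that $\mathsf{Nrd}$ is anisotropic on $Q$, hence that $Q$ is a division algebra and that the extension is unramified. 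In short, your argument is more self-contained and explicit, at the cost of some computation; the paper's is shorter because it delegates both directions to the general machinery of valued division algebras.
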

\begin{proof}
Assume first that $\ovl{Q}$ is split.
We may assume that $Q$ is a division algebra, since otherwise $v$ does not extend to a valuation on $Q$, by the definition.
As $\ovl{Q}$ is split, it follows by \cite[Proposition 1.1.7]{GS06} that $\ovl{b}=\ovl{z}^2-\ovl{a}\,\ovl{y}^2$ for some $\ovl{y},\ovl{z}\in {\kappa}$, and one can easily check that one can choose $\ovl{y}$ and $\ovl{z}$ to lie both in $\mg{\kappa}$.
Hence, there exist $y,z\in\mg{E}$ with $v(y)=v(z)=0$ such that $v(ay^2+b-z^2)>0$.
Consider $L=E(\sqrt{ay^2+b})$.
Note that $L$ is isomorphic over $E$ to maximal subfield of $Q$.
Since $v(ay^2+b)=v(z)=0$ and $\ovl{ay^2+b}=\ovl{z}^2\in\sq{\kappa}$, it follows by \Cref{L:FI2} that $v$ has two different extensions to $L$.
From this it follows by \cite[Section 1.2.2, Theorem 1.2]{TW15} that $v$ does not extend to $Q$.

Assume now that $\ovl{Q}$ is a division algebra.
This implies that $Q$ is a division algebra. 
(If $Q$ has zero divisors, then the conic $aX^2+bY^2=1$ has a solution over $K$, and this solution specializes to a solution of the conic $\ovl{a}X^2+\ovl{b}Y^2=1$ over $\kappa$, so also $\ovl{Q}$ has zero-divisors.)
The hypothesis implies that $v$ extends uniquely to a valuation on every maximal subfield of $Q$.
It follows by \cite[Section 1.2.2, Theorem 1.2]{TW15} that $v$ extends to a valuation $v_Q$ on $Q$.
Since the residue division ring of $v_Q$ is $\kappa$-isomorphic to $\ovl{Q}$, it follows by \cite[Section 1.2.1, Proposition 1.3]{TW15} that the value group of $v_Q$ is equal to $\Gamma$.
\end{proof}

The last statement can also be derived from \cite[Proposition 3.38]{TW15}.

\begin{prop}\label{P:infinite-rational}
Let $Q$ be an $E$-quaternion algebra and $F=E(Q)$.
Assume that $v$ has no unramified extension to $Q$.
Then there exist infinitely many extensions of $v$ to a valuation on $E(Q)$
whose residue fields are rational function fields over $\kappa$.
\end{prop}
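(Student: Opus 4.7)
The plan is to fix a presentation $Q=(a,b)_E$ with $a,b\in\mg{E}$, so that $F=E(X)(\sqrt{aX^2+b})$, and to exhibit infinitely many residually rational extensions of $v$ to $F$ by varying a Gauss extension of $v$ to $E(X)$. The parameter will be a scalar $\mu\in\mg{E}$ whose value $v(\mu)$ is allowed to range: each $\mu$ yields a Gauss extension $w_\mu$ of $v$ to $E(X)$ which I will show extends to one or two valuations on $F$ whose residue fields are rational over $\kappa$, and distinct values $v(\mu)$ will force the $w_\mu$ to be pairwise different on $E(X)$, giving the required infinite family.

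First I would reduce to a normal form using the standard isomorphisms of quaternion algebras $(a,b)_E\simeq(as^2,bt^2)_E$, $(a,b)_E\simeq(b,a)_E$ and $(a,b)_E\simeq(a,-ab)_E$. Since an even number of the three elements $v(a),v(b),v(ab)$ lies in $2\Gamma$, there are only two cases up to such an isomorphism: Case~A, with $v(a)=v(b)=0$; and Case~B, with $v(a)=0$ and $v(b)\notin 2\Gamma$, where one may further arrange $v(b)>0$ by replacing $b$ with $b^{-1}$ if necessary. In Case~B the hypothesis is automatic, because $v_Q(j^2)=v(b)\notin 2\Gamma$ forces $v_Q(j)=v(b)/2\notin\Gamma$ for any extension $v_Q$ of $v$ to $Q$, so every such extension is ramified. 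In Case~A, the hypothesis together with \Cref{L:residue-quatdivalg} forces $\ovl{Q}=(\ovl{a},\ovl{b})_\kappa$ to be split over $\kappa$, providing $y_0\in\mg{\mc{O}}_v$ and $z_0\in\mg{\mc{O}}_v$ with $v(ay_0^2+b-z_0^2)>0$.

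For each $\mu\in\mg{E}$ with $v(\mu)>0$ (and, in Case~B, additionally $2v(\mu)>v(b)$), I would take $w_\mu$ to be the Gauss extension of $v$ to $E(X)$ with respect to $Y_\mu=(X-y_0)/\mu$ in Case~A, or $Y_\mu=X/\mu$ in Case~B. By \Cref{gaussextdef}, $\Gamma_{w_\mu}=\Gamma$ and $\kappa_{w_\mu}=\kappa(\ovl{Y_\mu})$ is rational over $\kappa$. In Case~A, expanding $aX^2+b=(ay_0^2+b)+2ay_0\mu Y_\mu+a\mu^2 Y_\mu^2$ and using $v(ay_0^2+b-z_0^2)>0$, every correction term has positive $w_\mu$-value; therefore $w_\mu(aX^2+b)=0$ and $\ovl{aX^2+b}=\ovl{z_0}^2\in\mg{\sq{\kappa_{w_\mu}}}$, so by \Cref{L:FI2} the valuation $w_\mu$ has two distinct unramified extensions to $F$, each with residue field $\kappa(\ovl{Y_\mu})$. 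In Case~B, one computes $w_\mu(aX^2+b)=\min(2v(\mu),v(b))=v(b)\notin 2\Gamma=2\Gamma_{w_\mu}$, and it follows from the standard theory of ramified quadratic extensions of valued fields that $w_\mu$ extends uniquely (and ramifiedly) to $F$ with unchanged residue field $\kappa(\ovl{Y_\mu})$, again rational over $\kappa$.

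Finally, $w_\mu(X-y_0)=v(\mu)$ (respectively $w_\mu(X)=v(\mu)$ in Case~B), so distinct values $v(\mu)$ yield pairwise distinct valuations $w_\mu$ on $E(X)$, hence pairwise distinct extensions to $F$. As $\Gamma$ has infinitely many positive elements, the construction produces infinitely many extensions of $v$ to $F$ with rational residue field over $\kappa$. The main obstacle is the translation, in Case~A, of the hypothesis on $Q$ into the congruence $aX^2+b\equiv z_0^2$ modulo positive-value terms near $X=y_0$; this is exactly what \Cref{L:residue-quatdivalg} provides, and once it is in hand the remaining valuation-theoretic arguments are routine.
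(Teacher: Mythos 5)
Your overall strategy coincides with the paper's: produce infinitely many Gauss extensions of $v$ to $E(X)$, indexed by elements of $\mg{E}$ of varying value, and check that the residue field does not grow (or grows only in value group) when passing to $F$. Cases A and B themselves are handled correctly. The gap is in the reduction that precedes them. The parity assertion is false as stated (when $v(a)=v(b)=0$, all three of $v(a),v(b),v(ab)$ lie in $2\Gamma$), and, more seriously, the case division is not exhaustive: it can happen that $v(a)$, $v(b)$ and $v(ab)$ all lie outside $2\Gamma$, and then no presentation of $Q$ realizes either of your normal forms. The moves $(a,b)\mapsto(as^2,bt^2)$, $(a,b)\mapsto(b,a)$, $(a,b)\mapsto(a,-ab)$ only permute and translate the three cosets $v(a)+2\Gamma$, $v(b)+2\Gamma$, $v(ab)+2\Gamma$; more intrinsically, every slot entry of every presentation of $Q$ is, modulo squares, a value of the form $\la a,b,-ab\ra$, and when the three cosets are distinct and nontrivial every such value again has value in one of them. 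A concrete instance is $E=\cc((s))((t))$ with the rank-two valuation and $Q=(s,t)_E$: here $\Gamma/2\Gamma\cong(\zz/2\zz)^2$, all three cosets are nontrivial, the hypothesis of the proposition holds (any extension $v_Q$ of $v$ to $Q$ has $v_Q(i)=\tfrac12v(s)\notin\Gamma$, hence is ramified), and yet neither $v(a)=v(b)=0$ nor $v(a)=0$, $v(b)\notin 2\Gamma$ can be achieved.

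The gap is repairable, because your Case B computation never really uses $v(a)=0$: all that is needed is that $w_\mu(aX^2)=v(a)+2v(\mu)$ and $v(b)$ lie in different cosets modulo $2\Gamma$, so that $w_\mu(aX^2+b)=v(b)\notin 2\Gamma=2\Gamma_{w_\mu}$ once $v(a)+2v(\mu)>v(b)$. This is exactly how the paper organizes its first case, which it states as ``some presentation of $Q$ has $v(a)\notin2\Gamma$'' without any further normalization; the correct dichotomy is therefore ``every presentation has both slot values in $2\Gamma$ (then normalize to $v(a)=v(b)=0$ and run your Case A)'' versus ``some presentation has a slot value outside $2\Gamma$ (then run your Case B argument without assuming $v(a)=0$)''. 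A minor further point: your final count invokes the existence of infinitely many positive elements of $\Gamma$, i.e.\ the nontriviality of $v$; in Case B this is automatic from $v(b)\notin2\Gamma$, but in Case A it deserves a sentence (as in the paper, it follows from the hypothesis, since a trivial valuation extends unramified to any division algebra).
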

\begin{proof}
Note that the hypothesis implies that the valuation $v$ on $E$ is nontrivial.

We first assume that $Q\simeq(a,b)_E$ for certain $a,b\in \mg{E}$ with $v(a)\notin 2\Gamma$.
We fix $x\in F$ such that $F=E(x)(\sqrt{ax^2+b})$.
For $c\in\mg{E}$ we denote by $w_{c}$ the Gauss extension of $v$ to $E(x)$ with respect to $cx$ and recall from \Cref{gaussextdef} that its value group is again $\Gamma$ and that $\kappa_{w_c}=\kappa(\ovl{cx})$, which is a rational function field over $\kappa$.
Note that for $c,c'\in\mg{E}$ with $v(c)\neq v(c')$ we have $w_c\neq w_{c'}$.
For any $c\in \mg E$ such that $2v(c)> v(a) -v(b)$, we obtain that
$w_c(ax^2)=v(a)-2v(c)<v(b)$ and thus $w_{c}(ax^2+b) = v(a)-2v(c) \notin 2\Gamma = 2\Gamma_{w_{c}}$, and by \Cref{L:FI2} this implies that $w_c$ the residue field of any extension of $w_c$ to $F$ is equal to $\kappa_{w_c}$.
Since the valuation $v$ is nontrivial, the set $\{\gamma\in\Gamma\mid 2\gamma>v(a)-v(b)\}$ is infinite, so we obtain in this way infinitely many extensions of $v$ to $F$ whose residue fields are rational function fields over $\kappa$.
Hence the statement is settled for this case.

We may henceforth assume that for any two elements $a,b\in\mg{E}$ with 
$Q\simeq(a,b)_E$ we have $v(a),v(b)\in 2\Gamma$.
In this case we may choose $a,b\in\mg{E}$ with $Q=(a,b)_E$ and $v(a)=v(b)=0$.
It now follows by \Cref{L:residue-quatdivalg} from the hypothesis that the $\kappa$-quaternion algebra $(\ovl{a},\ovl{b})_{\kappa}$ is split.
Hence there exist $\ovl y,\ovl z\in\kappa$ with $\ovl{b}=\ovl{y}^2-\ovl{a}\,\ovl{z}^2$, and one may choose $\ovl y$ and $\ovl z$ to lie in $\mg{\kappa}$.
In other words there exist $y,z\in\mg{E}$ with $v(y)=v(z)=0$ and
$v(ay^2+b-z^2)>0$.

As $Q=(a,b)_E$, we find an element $x\in F$ such that $F=E(x)(\sqrt{ax^2+b})$.
As $F/E$ is transcendental and $z\in E$, the element $x-z$ is transcendental over~$E$.
For any $c\in \mg{E}$, we denote by $w'_c$ the Gauss extension of $v$ to $E(x)$ with respect to $c\cdot(x-z)$.
As in the previous case we obtain for $c,c'\in \mg{E}$ with $v(c)\neq v(c')$ that $w'_c\neq w'_{c'}$.
Consider now $c\in \mg{E}$ with $v(c)<0$. 
Since $w'_c(x-z)=-v(c)>0$ it follows that $w'_c(x)=w'_c(z)=v(z)=0$ and $\ovl{x}=\ovl{z}\in\mg{\kappa}$.
Hence $w'_c(ax^2+b)\geq 0$ and $\ovl{ax^2+b}=\ovl{a}\cdot\ovl{z}^2+\ovl{b}=\ovl{y}^2\in\sq{\kappa}$.
Therefore $w'_c(ax^2+b)=0$, and we conclude by \Cref{L:FI2} that the residue field of any extension of $w'_c$ to a valuation on $F$ is equal to $\kappa_{w'_{c}}$, which is a rational function field over $\kappa$.
 Since the set $\{\gamma\in \Gamma\mid \gamma<0\}$ is infinite, we obtain in this way infinitely many extensions of $v$ to $F$ whose residue fields are rational function fields over $\kappa$.
\end{proof}

\begin{thm}\label{T:main}
Let $Q$ be an $E$-quaternion algebra and $F=E(Q)$.
Let  $v$ be a valuation on $E$ with residue field $\kappa$ of characteristic different from $2$.
Then the following are equivalent:
\begin{enumerate}[$(i)$]
\item The valuation $v$ extends to a valuation on $F$ whose residue field is transcendental and not ruled over $\kappa$.
\item The valuation $v$ extends uniquely to a valuation on $F$ whose residue field is transcendental and regular over $\kappa$.
\item The valuation $v$ has an unramified extension to a valuation on $Q$.
\end{enumerate}
If these conditions are satisfied, then the valuation in $(i)$ is unique and it coincides with the valuation characterised in $(ii)$.
\end{thm}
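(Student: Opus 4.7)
The plan is to prove the equivalences in the order (iii) $\Leftrightarrow$ (i), (ii) $\Rightarrow$ (iii), and (iii) $\Rightarrow$ (ii), with the last of these as the main obstacle; the final ``moreover'' clause then falls out by construction, since the valuation produced in the (iii) $\Rightarrow$ (i) direction will simultaneously satisfy (i) and (ii).

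For (iii) $\Rightarrow$ (i), I would invoke \Cref{L:residue-quatdivalg} to write $Q\simeq(a,b)_E$ with $v(a)=v(b)=0$ and $(\ovl a,\ovl b)_\kappa$ a division algebra. Taking $F=E(X)(\sqrt{aX^2+b})$, the Gauss extension of $v$ to $E(X^2)$ with respect to $\frac{aX^2}{b}$ extends uniquely to a valuation $w$ on $F$ by \Cref{uniqueextensionlem}, and \Cref{P:fufi-conic-gauss-residue}$(ii)$ computes $\kappa_w=\kappa(T)\bigl(\sqrt{\ovl aT^2+\ovl b}\bigr)$; the division hypothesis on $(\ovl a,\ovl b)_\kappa$ makes this a regular non-rational genus-zero function field over $\kappa$, hence not ruled. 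Conversely, for (i) $\Rightarrow$ (iii), \Cref{C:main1} gives a unique $w$ whose residue field is transcendental and non-ruled, with $\Gamma_w=\Gamma$. Fixing a presentation $F=E(X)(\sqrt{aX^2+b})$, \Cref{non-ruled-val} identifies $w$ as the unique extension of the Gauss extension on $E(X^2)$ with respect to $\frac{aX^2}{b}$ and forces $v(a),v(b)\in 2\Gamma$. Then \Cref{P:fufi-conic-gauss-residue}$(ii)$ yields $\kappa_w=\kappa(T)\bigl(\sqrt{\ovl a_0T^2+\ovl b_0}\bigr)$ for some $a_0\in a\sq E$ and $b_0\in b\sq E$ of value zero; the non-rationality of $\kappa_w$ forces $(\ovl a_0,\ovl b_0)_\kappa$ to be division, and since $Q\simeq(a_0,b_0)_E$ \Cref{L:residue-quatdivalg} delivers (iii).

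The implication (ii) $\Rightarrow$ (iii) is a short contrapositive: if (iii) fails, \Cref{P:infinite-rational} produces infinitely many extensions of $v$ to $F$ whose residue fields are rational function fields over $\kappa$; these are transcendental and regular, so uniqueness in (ii) fails.

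The main obstacle is the uniqueness in (iii) $\Rightarrow$ (ii); existence is the valuation $w$ already built in (iii) $\Rightarrow$ (i). Given any extension $w'$ of $v$ to $F$ with transcendental regular residue field, if $\kappa_{w'}/\kappa$ is not ruled then $w'=w$ by \Cref{C:main1}. Otherwise $\kappa_{w'}/\kappa$ is regular and ruled, hence rational, say $\kappa_{w'}=\kappa(T)$. I plan to exclude this last possibility by a case analysis on $w'(X)$ in the presentation $F=E(X)(\sqrt{aX^2+b})$ with $v(a)=v(b)=0$ and $(\ovl a,\ovl b)_\kappa$ division. If $w'(X)<0$, the element $\sqrt{aX^2+b}/X$ is a $w'$-unit whose residue squares to $\ovl a$, which forces $\ovl a\in\sq\kappa$ by relative algebraic closedness of $\kappa$ in $\kappa(T)$; this splits $(\ovl a,\ovl b)_\kappa$. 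The case $w'(X)>0$ is symmetric and yields $\ovl b\in\sq\kappa$. If $w'(X)=0$, either $w'(aX^2+b)=0$ and the nonzero residue $\ovl a\ovl X^2+\ovl b\in\kappa(T)$ is a square, giving a $\kappa(T)$-rational point on the conic $\ovl aX^2+\ovl bY^2=Z^2$ and, by a standard leading-coefficient argument using that $\kappa$ is algebraically closed in $\kappa(T)$, a $\kappa$-rational point, thereby splitting $(\ovl a,\ovl b)_\kappa$; or $w'(aX^2+b)>0$, in which case $-\ovl b/\ovl a=\ovl X^2$ becomes a square in $\kappa(T)$ hence in $\kappa$, and then $(\ovl a,\ovl b)_\kappa\simeq(\ovl a,-\ovl a)_\kappa$ is split. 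In every case the conclusion contradicts $(\ovl a,\ovl b)_\kappa$ being division, so no such $w'$ exists and uniqueness is established.
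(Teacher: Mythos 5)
Your argument is correct, and for most of the equivalences it follows the paper's path: the same normalization of $(a,b)$, \Cref{uniqueextensionlem} and \Cref{P:fufi-conic-gauss-residue} for $(iii)\Rightarrow(i)$ and $(i)\Rightarrow(iii)$, and \Cref{P:infinite-rational} for $(ii)\Rightarrow(iii)$. One small compression: \Cref{L:residue-quatdivalg} does not by itself produce a presentation with $v(a)=v(b)=0$; you first need to normalize so that either $v(a)=v(b)=0$ or $v(a)\notin 2\Gamma$, and then observe that an unramified extension of $v$ to $Q$ forces $v(a)\in 2\Gamma$ because $a$ is a square in $Q$ (the paper records this as the step $(iii)\Rightarrow(iii')$). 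Where you genuinely diverge is the uniqueness in $(iii)\Rightarrow(ii)$. The paper argues directly: for any extension $w'$ with $\kappa_{w'}/\kappa$ transcendental and regular, the elements $\ovl{a},\ovl{b},-\ovl{a}\ovl{b}\notin\sq{\kappa}$ remain non-squares in $\kappa_{w'}$ by relative algebraic closedness, which pins down $w'(x)=0$ and $w'(ax^2+b)=0$, then forces $\ovl{x}$ to be transcendental, so that \Cref{uniqueextensionlem} identifies $w'$ with $w_\star$ --- with no appeal to \Cref{C:main1} and no ruled/non-ruled dichotomy. You instead split into the non-ruled case (disposed of by \Cref{C:main1}) and the rational case $\kappa_{w'}=\kappa(T)$, which you exclude by a case analysis on $w'(X)$ culminating in a $\kappa(T)$-rational point on the conic $\ovl{a}U^2+\ovl{b}V^2=W^2$ that descends to $\kappa$. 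This works, but it costs an extra external ingredient --- that a conic over $\kappa$ with a $\kappa(T)$-point has a $\kappa$-point, i.e.\ anisotropy is preserved under the purely transcendental extension $\kappa(T)/\kappa$ --- which the paper's argument avoids; note also that the relevant property here is pure transcendence of $\kappa(T)/\kappa$, not relative algebraic closedness of $\kappa$ in $\kappa(T)$ as you phrase it. What your route buys is transparency: the cases $w'(X)<0$, $=0$, $>0$ visibly correspond to splitting $\ovl{Q}$ via $\sqrt{\ovl{a}}$, a rational point, or $\sqrt{-\ovl{a}\ovl{b}}$, echoing the three cases at the start of the proof of \Cref{non-ruled-val}.
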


\begin{proof}
In view of \Cref{P:fufi-conic-gauss-residue}, we may fix $a,b\in\mg{E}$ such that $Q\simeq (a,b)_E$ and either $v(a)=v(b)=0$ or $v(a)\notin 2\Gamma$.
If $v(a)=v(b)=0$, then we further set $\ovl{Q}=(\ovl{a},\ovl{b})_\kappa$.
As $F=E(Q)$ there exists $x\in F$ such that $F=E(x)(\sqrt{ax^2+b})$.
We fix an extension $w_\star$ to $F$ of the Gauss extension of $v$ on 
$E(x^2)$ with respect to $\frac{ax^2}{b}$.
To structure the proof we introduce three more conditions:
{\it
\begin{enumerate}[$(i')$]
\item $\kappa_{w_\star}/\kappa$ is not ruled.
\item There exists no extension of $v$ to $F$ different from $w_\star$ whose residue field is transcendental and regular over $\kappa$.
\item $v(a)=v(b)=0$ and $\ovl{Q}$ is a division algebra.
\end{enumerate}}
We now show that all the conditions $(i)-(iii)$ and $(i')-(iii')$ are equivalent.
(Note that $(i')$ and $(ii')$ a priori depend on the choice of $w_\ast$, but of course this is not the case if they are equivalent with conditions that do not depend on this choice.)
Establishing these equivalences will further give that $w_\star$ is the only valuation on $F$ which can satisfy $(i)$ or $(ii)$, which confirms the last part of the statement.

\underline{$(i)\Leftrightarrow (i')$:} This follows from \Cref{non-ruled-val}.

\underline{$(ii)\Leftrightarrow (ii')$:} This is clear from \Cref{P:fufi-conic-gauss-residue}.

\underline{$(ii)\Rightarrow (iii)$:} This follows from \Cref{P:infinite-rational}.

\underline{$(iii)\Rightarrow(iii')$:}
Assume that $v$ has an unramified extension to $Q$.
Since $a$ is a square in $Q$, we obtain that $v(a)\in 2\Gamma$, whence $v(a)=v(b)=0$ in view of the choice of $a$ and $b$.
Since $v$ extends to $Q\simeq (a,b)_E$, it follows by \Cref{L:residue-quatdivalg} that $\ovl{Q}$ is a division algebra.

\underline{$(iii')\Rightarrow (i)$:}
Assume that $v(a)=v(b)=0$ and that $\ovl{Q}$ is a division algebra.
By \Cref{P:fufi-conic-gauss-residue}, we obtain that $\kappa_{w_\star}\simeq_\kappa \kappa(\ovl{Q})$. 
As $\ovl{Q}$ is a division algebra, it follows that $\kappa_{w_\star}/\kappa$ is not ruled.

\underline{$(iii')\Rightarrow (ii')$:}
Consider an arbitrary extension $w$ of $v$ to $F$ such that the residue field extension $\kappa_{w}/\kappa$ is transcendental and regular.
Since $\ovl{Q}$ is a division algebra, we have in particular that $\ovl{a},\ovl{b},-\ovl{ab}\notin\sq{\kappa}$. 
Since $\kappa_{w}/\kappa$ is regular, we deduce that $\ovl{a},\ovl{b},-\ovl{ab}\notin\sq{\kappa}_{w}$.
Since $\ovl{a}\notin\sq{\kappa}_{w}$, we have $w(x)\geq 0$, and  as $\ovl{b}\notin\sq{\kappa}_{w}$, we also have $w(x)\leq 0$.
Hence $w(x)=0$, and it follows that $w(ax^2+b)\geq 0$.
Since $-\ovl{ab}\notin\sq{\kappa}_{w}$, we further obtain that $w(ax^2+b)=0$.

Since $\kappa_{w}/\kappa$ is regular, the residue $\ovl{x}$ either lies in $\mg\kappa$ or it is transcendental over $\kappa$.
However,  assuming that $\ovl{x}\in\mg\kappa$,
we would obtain that $\ovl{a}\,\ovl{x}^2+\ovl{b}\in\sq{\kappa}$, 
which is impossible because $\ovl{Q}$ is a division algebra.
Hence $\ovl{x}$ is transcendental over~$\kappa$, and we conclude by \Cref{uniqueextensionlem} that $w=w_\star$.

\underline{$(i')\Rightarrow (iii)$:} Assume that $\kappa_{w_\star}/\kappa$ is not ruled.
By \Cref{non-ruled-val}, it follows that $\Gamma_{w_\star}=\Gamma$.
By \Cref{uniqueextensionlem} we obtain that $v(a)\in 2\Gamma$, whereby $v(a)=v(b)=0$ in view of our choice of $a$ and $b$.
It follows by \Cref{P:fufi-conic-gauss-residue} that $\kappa_{w_\star}\simeq_\kappa \kappa(\ovl{Q})$.
Since $\kappa_{w_\star}/\kappa$ is not ruled, it follows that $\ovl{Q}$ is a division algebra, and we conclude by \Cref{L:residue-quatdivalg} that $v$ has an unramified extension to $Q$.
\end{proof}

\subsubsection*{Acknowledgments}
We wish to express our gratitude to Nicolas Daans, Shira Gilat, David Grimm and Jean-Pierre Tignol for inspiring discussions and various valuable comments. 
We further wish to thank several referees for their careful reading and helpful input.

\bibliographystyle{amsalpha}

\end{document}